\newcommand{\bwr}{\boldsymbol{\wr}}
\newcommand{\1}{\mathbf{1}}
\newcommand{\e}{\varepsilon}
\renewcommand{\l}{\lambda}
\newtheorem{thm}{Theorem}
\newtheorem*{mainproblem*}{Problem}
\newtheorem{lem}[thm]{Lemma}
\newtheorem{prop}[thm]{Proposition}
\newtheorem{cor}[thm]{Corollary}
\theoremstyle{definition}
\newtheorem*{defn}{Definition}
\theoremstyle{remark}
\newtheorem{rem}{Remark}
\DeclareMathOperator{\cay}{Cay}
\DeclareMathOperator{\tsp}{tsp}
\DeclareMathOperator{\TSP}{TSP}
\newcommand{\com}{\ensuremath{\mathrm{K}}}
\newcommand{\Ts}{\ensuremath{\mathsf{Ts}}}
\newcommand{\La}{\ensuremath{\mathsf{La}}}
\newcounter{mylisti} \newcounter{mylistii}
\newcounter{nest}
\newcommand{\defaultlabel}{}
\newcommand{\widethickbar}[1]{\accentset{\rule{.6em}{.6pt}}{#1}}
\newcommand{\psib}{\ensuremath{\widethickbar{\psi}}}
\newcommand{\be}{\ensuremath{\mathbb E}}
\newcommand{\bp}{\ensuremath{\mathbb P}}
\newcommand{\bz}{\ensuremath{\mathbb Z}}
\newcommand{\cP}{\ensuremath{\mathcal P}}
\newcommand{\ft}{\ensuremath{\tilde{f}}}
\newcommand{\abs}[1]{\lvert #1\rvert}
\newcommand{\diam}{\operatorname{diam}}
\newcommand{\fin}[1]{{[#1]}^{<\omega}}
\newcommand{\bi}{\ensuremath{\boldsymbol{1}}}
\newcommand{\norm}[1]{\lVert #1\rVert}
\newcommand{\E}{\exists\,}
\newcommand{\varf}{\varphi}
\renewcommand{\geq}{\geqslant}
\renewcommand{\leq}{\leqslant}
\newcommand{\ds}{\displaystyle}
\newcommand{\ie}{\textit{i.e.,}\ }
\newcommand{\cf}{\textit{cf.}\ }
\newcommand{\bE}{\ensuremath{\mathbb E}}
\newcommand{\bN}{\ensuremath{\mathbb N}}
\newcommand{\bR}{\ensuremath{\mathbb R}}
\newcommand{\bZ}{\ensuremath{\mathbb Z}}
\newcommand{\symdif}{\triangle}
\newcommand{\LF}{\mathsf{LF}}
\begin{document}

    
\title{Stochastic approximation of lamplighter metrics}

\author{F.~Baudier}
\address{Department of Mathematics, Texas A\&M University, College
  Station, TX 77843, USA}
\email{florent@tamu.edu}

\author{P.~Motakis}
\address{Department of Mathematics, University of Illinois at
  Urbana-Champaign, Urbana, IL 61801, USA}
\email{pmotakis@illinois.edu}

\author{Th.~Schlumprecht}
\address{Department of Mathematics, Texas A\&M University, College
  Station, TX 77843, USA and Faculty of Electrical Engineering, Czech
  Technical University in Prague,  Zikova 4, 166 27, Prague}
\email{schlump@math.tamu.edu}

\author{A.~Zs\'ak}
\address{Peterhouse, Cambridge, CB2 1RD, UK}
\email{a.zsak@dpmms.cam.ac.uk}

\date{\today}

\thanks{F. Baudier was supported by the National Science
  Foundation under Grant Number DMS-1800322. P. Motakis
  was  supported by National Science Foundation under Grant Numbers
  DMS-1600600 and DMS-1912897. Th. Schlumprecht was supported by the National
  Science Foundation under Grant Numbers DMS-1464713 and
  DMS-1711076. A. Zs\'ak was supported by the 2019 Workshop
  in Analysis and Probability at Texas A\&M University.}

\keywords{Lamplighter graphs, Lamplighter groups, Lamplighter metrics, bi-Lipschitz embeddings into $L_1$, stochastic embeddings into dominating tree-metrics}
\subjclass[2010]{05C05, 05C12, 46B85}

\begin{abstract}
We observe that embeddings into random metrics can be fruitfully used to study the $L_1$-embeddability of lamplighter graphs or groups, and more generally lamplighter metric spaces. Once this connection has been established, several new upper bound estimates on the $L_1$-distortion of lamplighter metrics follow from known related estimates about stochastic embeddings into dominating tree-metrics. For instance, every lamplighter metric on a $n$-point metric space embeds bi-Lipschitzly into $L_1$ with distortion $O(\log n)$. 
In particular, for every finite group $G$ the lamplighter group $H = \mathbb{Z}_2\wr G$ bi-Lipschitzly embeds into $L_1$ with distortion $O(\log\log|H|)$.
In the case where the ground space in the lamplighter construction is a graph with some topological restrictions, better distortion estimates can be achieved. Finally, we discuss how a coarse embedding into $L_1$ of the lamplighter group over the $d$-dimensional infinite lattice $\bZ^d$ can be constructed from bi-Lipschitz embeddings of the lamplighter graphs over finite $d$-dimensional grids, and we include a remark on Lipschitz free spaces over finite metric spaces.
\end{abstract}

\maketitle

\section{Introduction}
Understanding how a group or a graph, viewed as a geometric object, can be faithfully embedded into certain Banach spaces is a fundamental topic with far reaching applications (see for instance \cite{AIR18}, \cite{BaudierJohnson16}, \cite{Indyk01}, \cite[Chapter 15]{Matousek_book}, \cite{Naor10}, \cite{Naor13}, \cite{NowakYubook}, and \cite{Ostrovskii_book} and the references therein) that is common to geometric group theory and theoretical computer science. The wreath product of two groups, and in particular lamplighter groups, received a lot of attention in geometric group theory as they provide a wealth of groups with interesting algebraic and geometric properties. The main difficulty in studying a lamplighter group metric comes from the fact that it involves a travelling salesman problem on a Cayley graph of the group. Travelling salesman problems are typically $\mathsf{NP}$-hard and thus estimating exactly pairwise distances with respect to a lamplighter group metric becomes increasingly difficult as soon as we drift too far away from groups such as free groups or cyclic groups, for which the travelling salesman problem can be handled adequately.

Coarse embeddings of lamplighter groups over infinite groups which are  finitely generated  have been intensively investigated (see for instance \cite{ADS09}, \cite{ANP09}, \cite{CSV08}, \cite{CTV07}, \cite{csv:12}, \cite{Li10}, \cite{naor-peres:08}, \cite{naor-peres:11}, \cites{StalderValette07}, \cite{Tessera08}, \cite{Tessera09}, \cite{Tessera11}). Bi-Lipschitz embeddings of finite lamplighter groups were considered in \cite{ANV10}, \cite{LNP09}, \cite{JolissaintValette14}, \cite{Tessera12}. The bi-Lipschitz geometry of lamplighter graphs was initiated in \cite{bmsz:19}. We will now focus our attention to the bi-Lipschitz embeddability into the Banach space $L_1$ which is a prime target in theoretical computer science. It was shown in~\cite{naor-peres:08} that finite cyclic lamplighter groups bi-Lipschitzly embed into $L_1$ with distortion bounded above independently of the size of the cyclic group. It follows from this result and general principles that the lamplighter group over the infinite cyclic group bi-Lipschitzly embeds into $L_1$ (an alternative direct proof is also given in \cite{naor-peres:08}). These results were extended to non-superreflexive Banach space targets in \cite{OR18}. That the lamplighter group over a finitely generated free group bi-Lipschitzly embeds into $L_1$, is a by-product of the work of de Cornulier, Stadler and Valette \cite{csv:12} on the equivariant $L_1$-compression of wreath products.  In \cite{bmsz:19}, it was shown that the lamplighter graph over \emph{any} tree bi-Lipschiztly embeds into $L_1$ with distortion at most ~6. In all the $L_1$-embedding results mentioned above, the travelling salesman problems involved have an exact closed formula. However, even on simple planar graphs such as $2$-dimensional grids or lattices, the travelling salesman problem is $\mathsf{NP}$-hard. Naor and Peres \cite{naor-peres:11} showed that lamplighter groups over groups of polynomial growth admit coarse embeddings into $L_1$ that are very close to being bi-Lipschitz. Naor and Peres argument, which uses a multi-scale approximation of the travelling salesman problem that is inspired by Jones' travelling salesman theorem \cite{Jones1990}, will be discussed in Section \ref{sec:coarse}.

In this paper, we propose a new approach to construct bi-Lipschitz embeddings of lamplighter metrics into $L_1$. In the design of approximation algorithms, a classical technique is to reduce a difficult optimization problem on a graph to a corresponding optimization problem on a tree. The optimization problem on a tree is in general much more tractable, and typically its solution can be transferred back to the original graph with some guarantees on the loss incurred in the process. To incorporate this tree-reduction paradigm in geometric approximation algorithms, it would be desirable to embed bi-Lipschiztly a graph into a tree metric keeping the distortion low. Unfortunately, this approach already breaks down for cycles. Indeed, it can be shown (see \cite{RabinovichRaz98} for a topological argument and \cite{Gupta01} for a combinatorial one) that embedding the $n$-cycle into a tree metric incurs distortion $\Omega(n)$ . A simple, but fundamental, observation attributed to Karp \cite{Karp89}, is that if we uniformly randomly delete an edge from the $n$-cycle there is a natural embedding into a simple path of length $n-1$  (an thus into a tree) which does not contract the distances, and expands them by only a factor of $2$ \emph{on average}. Bartal \cite{Bartal96} formalized this powerful idea and introduced the concept of stochastic embeddings into 
dominating tree-metrics (also called random embeddings into dominating trees, embeddings into distribution of trees, embeddings into convex combinations of trees, etc.) and gave a general upper bound on the distortion. The connection with $L_1$-embeddability stems from the simple fact that embeddability into distribution of trees implies embeddability into $L_1$. The (stochastic) distortion of embeddings into random trees has been estimated for several important classes of metric spaces and it will be discussed in Section \ref{sec:app}. 

Indyk and Thaper explained in \cite{IndykThaper03} that a result of Charikar from \cite{Charikar02} implicitly showed that the techniques of Kleinberg and Tardos \cites{KleinbergTardos99,KleinbergTardos02} implied that if a metric space $X$ can be stochastically embedded into dominating tree-metrics with distortion $D$ then the Earthmover distance over $X$ can be embedded into $\ell_1$ with distortion $O(D)$. Our main result shows that a similar phenomenon holds for lamplighter metrics. This new approach combined with the $L_1$-embedding of lamplighter tree metrics from \cite{bmsz:19} allows us to recover the $L_1$-embedding results for finite cyclic lamplighter groups and toric lamplighter groups, with matching distortion. Moreover, $L_1$-distortion upper bounds for lamplighter metrics over new classes of metrics are obtained. In light of the arguable difficulty to understand the planar lamplighter group metric, it seems (at least to us) that a \emph{direct} approach to construct embeddings into $L_1$ for these fairly large classes of metrics (such as outerplanar metrics or finite pathwidth metrics) would be a rather daunting task.

In Section \ref{sec:lamp-tsp} we introduce the general concept of lamplighter metrics over metric spaces and travelling salesman semimetrics, and in Section \ref{sec:lifting} we prove our main result. Applications and limitations of the approach using embeddings into distribution of trees are discussed in Section \ref{sec:app}.

\subsection*{Acknowledgements}
The first author would like to thank Alex Andoni, Tasos Sidiropoulos, and Ilya Razenshteyn for illuminating discussions on the geometry of the Earthmover distance and embeddings into random metrics. 
\section{Lamplighter metrics and travelling salesman semi-metrics}\label{sec:lamp-tsp}
Let $G=(V,E)$ be a graph, which may be finite or infinite, and in
the latter case we do not necessarily assume $G$ to be locally
finite. The \emph{lamplighter graph $\La(G)$ of $G$ }is the graph with
vertex set consisting of all pairs $(A,x)$ where $A$ is a finite
subset of $V$ and $x\in V$. Two vertices $(A,x)$ and $(B,y)$ are
connected by an edge if and only if either $A=B$ and $xy$ is an edge
in $G$, or $x=y$ and $A\symdif B=\{x\}$. The usual interpretation of
$\La(G)$ is as follows. Each vertex of $G$ has a light attached to
it. A vertex $(A,x)$ of $\La(G)$ is the configuration in which $A$ is
the set of lights that are switched on, and a lamplighter is standing
at vertex $x$. The lamplighter can make one of two possible moves: he
can either move to another vertex $y$ along an edge in $G$, or stay at
$x$ and change the status of the light at vertex $x$. Lamplighter
graphs are generalizations of lamplighter groups as follows. Given a
group $\Gamma$ with generating set $S$, the lamplighter graph of the
Cayley graph $\cay(\Gamma,S)$ of $\Gamma$ with respect to $S$ is the
Cayley graph of the lamplighter group of $\Gamma$ with respect to an
appropriate generating set.

When $G$ is connected, it becomes a metric space with the graph
distance $d=d_G$ given by the length of a shortest path between
vertices. In this case $\La(G)$ is also connected and its graph
distance is given by the following formula
(see for instance~\cite{bmsz:19}*{Proposition~1}).
\begin{equation}\label{eq:lamplighter-distance}
  d_{\La(G)}\big((A,x),(B,y)\big)=\tsp_G(x,A\symdif B,y)+\abs{A\symdif
    B}
\end{equation}
where $\tsp_G(x,C,y)$ denotes the solution of a combinatorial travelling salesman
problem in $G$ for $x,y\in G$ and $C$ a finite subset of $G$. More precisely,
$\tsp_G(x,C,y)$ is the least $n\geq 0$ such that there is a sequence
$x_0,x_1,\dots,x_n$ of vertices in $G$ satisfying $x_0=x$, $x_n=y$,
$x_{i-1}x_i\in E(G)$ for $1\leq i\leq n$ and
$C\subset\{x_0,\dots,x_n\}$.

We will be working in the more general framework of lamplighter metrics over metric spaces since it slightly simplifies the exposition of the ensuing arguments, and it is more convenient to formalize results for lamplighter metrics over weighted graphs. In the metric travelling salesman problem there is given a set $V$ together with a symmetric cost function $c(x,y)$, $x,y\in V$, which represents the cost of travelling between $x$ and $y$ and that satisfies the triangle inequality. The problem, which we denote by $\TSP_V(x,A,y)$, of finding the minimal cost $\tsp_V(x,A,y)$ of a simple path in $\com_V$, the complete graph over $V$ (with edge-weight function $c$), from $x$ to $y$ that visits all points in the finite set $A$
is known to be NP-hard. By analogy with the lamplighter graph construction, we make the following definition. In the sequel, $\fin{X}$ denotes the set of all finite subsets of a set $X$.
\begin{defn}
  Let $X$ be a metric space with metric $d_X$. We define the
  \emph{lamplighter metric space $\La(X)$ over $X$ }to be the set
  $\fin{X}\times X$ equipped with the metric
  \begin{equation}
  \begin{aligned}
    \label{eq:metric-lamplighter-distance}
    d_{\La(X)}((A,x),(B,y))&=\tsp_X(x,A\symdif B,y)
    +\abs{A\symdif B}.
    \end{aligned}
  \end{equation}
  We will drop the subscripts $X$ and $\La(X)$ when appropriate.
\end{defn}

The fact that $d_{\La(X)}$ is a metric follows from the fact that it is the sum of a semi-metric $\tau_X$ on $\fin{X}\times X$, defined by \[\tau_X((A,x),(B,y))=\tsp_X(x,A\symdif B,y),\] and the classical symmetric difference metric $d_{\symdif}$ on $\fin{X}$ where $d_{\symdif}(A,B)=|A\symdif B|$.
The fact that this is indeed an extension of the notion of lamplighter graph amounts to proving that $\tsp_X(x,A,y)=\tsp_G(x,A,y)$ for $x,y\in G$ in the case where the metric on $X$ coincides with the graph metric on the vertex set of a connected graph $G$, $\ie d=d_X=d_G$. In this situation, the graph distance $d_{\La(G)}$ on the vertex set of the lamplighter graph $\La(G)$ is precisely the metric $d_{\La(X)}$ of the lamplighter space $\La(X)$ as defined above.

\begin{lem}
Let $G=(V,E)$ be a connected graph and denote by $(X,d)$ the set $V$ equipped with the canonical graph metric. Then the combinatorial travelling salesman problem on $G$ coincides with the metric travelling salesman problem on $(X,d)$, \ie
$\tsp_{X}(x,A,y)=\tsp_G(x,A,y)$.
\end{lem}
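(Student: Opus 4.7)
The plan is to prove the two inequalities $\tsp_X(x,A,y)\leq \tsp_G(x,A,y)$ and $\tsp_G(x,A,y)\leq \tsp_X(x,A,y)$ separately. The underlying reason is simply that $d$ \emph{is} the graph metric, so $G$-walks can be read as paths in $\com_V$ whose edge costs are bounded by walk lengths, and conversely edges of $\com_V$ can be realized by geodesic $G$-walks.

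For the direction $\tsp_G\leq \tsp_X$ (the easy direction), I would start with an optimal simple path $v_0=x,v_1,\ldots,v_k=y$ in $\com_V$ visiting $A$, of cost $\sum_{j=1}^{k}d(v_{j-1},v_j)=\tsp_X(x,A,y)$. For each $j$, since $d=d_G$, there exists a walk in $G$ from $v_{j-1}$ to $v_j$ of length exactly $d(v_{j-1},v_j)$. Concatenating these walks yields a walk in $G$ from $x$ to $y$, whose vertex set contains $\{v_0,\ldots,v_k\}\supseteq A$, of total length $\tsp_X(x,A,y)$. Hence $\tsp_G(x,A,y)\leq \tsp_X(x,A,y)$.

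For the direction $\tsp_X\leq \tsp_G$, I would start with an optimal $G$-walk $x_0=x,x_1,\ldots,x_n=y$ of length $n=\tsp_G(x,A,y)$ with $A\subseteq\{x_0,\ldots,x_n\}$, and extract from it a simple path in $\com_V$ visiting $A$. The natural idea is to keep only first visits, but this must be adjusted so that $y$ is the final vertex. Concretely, I would define an injection $\tau\colon A\cup\{x,y\}\to\{0,1,\ldots,n\}$ by $\tau(y)=n$ and $\tau(s)=\min\{i:x_i=s\}$ for $s\neq y$; injectivity holds because the first-visit indices are distinct for distinct vertices and because $x_n=y$ forces $\tau(s)\neq n$ for any $s\neq y$. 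Listing $A\cup\{x,y\}$ as $s_0,s_1,\ldots,s_k$ with $\tau(s_0)<\tau(s_1)<\cdots<\tau(s_k)$, one gets $s_0=x$, $s_k=y$, all $s_j$ distinct, and $A\subseteq\{s_0,\ldots,s_k\}$. Using the triangle inequality $d(s_{j-1},s_j)=d(x_{\tau(s_{j-1})},x_{\tau(s_j)})\leq \tau(s_j)-\tau(s_{j-1})$ and telescoping, the total cost is at most $\tau(s_k)-\tau(s_0)=n$, so $\tsp_X(x,A,y)\leq n=\tsp_G(x,A,y)$.

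The only subtlety, and the point that needs the most care, is the degenerate case $x=y$ and its compatibility with the word ``simple'' in the definition of the metric TSP; here $\tau$ must be defined on the set $A\cup\{x,y\}$ viewed as a set (not a multiset), and one should check that the extracted sequence is genuinely a simple path from $x$ back to $y=x$ (forced then to satisfy $A\subseteq\{x\}$), which matches what $\tsp_G$ returns in that case via a closed walk of length $0$. Apart from this small bookkeeping, both directions are immediate consequences of the triangle inequality for $d_G$.
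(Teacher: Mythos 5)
Your core argument is correct and is essentially the paper's proof. The easy direction is verbatim the paper's: replace each edge of an optimal simple path in $\com_V$ by a $G$-geodesic of length $d(x_{i-1},x_i)$ and concatenate. For the other direction the paper is in fact terser than you are: it just observes that an optimal $G$-walk, read inside $\com_X$, has cost equal to its length (each step has $d(x_{i-1},x_i)=1$) and concludes $\tsp_G\geq\tsp_X$, leaving implicit the reduction from a walk to a \emph{simple} path that the definition of $\tsp_X$ nominally requires. Your first-visit map $\tau$ and the telescoping bound $d(s_{j-1},s_j)\leq\tau(s_j)-\tau(s_{j-1})$ make exactly that shortcutting step explicit, so on this point your write-up is more careful than the published one.

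The one genuine problem is your aside on the degenerate case $x=y$, which is wrong as stated. Nothing forces $A\subseteq\{x\}$ there: the lamplighter distance requires $\tsp(x,A\symdif B,x)$ for points $(A,x)$, $(B,x)$ with $A\neq B$, and in that case $\tsp_G(x,A\symdif B,x)$ is the length of a shortest \emph{closed} walk through $A\symdif B$, which is positive, not $0$. Moreover your construction, with $\tau$ defined on the set $A\cup\{x,y\}=A\cup\{x\}$ and $\tau(x)=\tau(y)=n$, then outputs a sequence starting at the earliest-visited element of $A$ and ending at $x$, i.e.\ not a path from $x$ to $y$ at all. The clean fix is to treat the two endpoints as distinct markers, assigning $\tau=0$ to the initial one, $\tau=n$ to the terminal one, and first-visit times to the elements of $A\setminus\{x,y\}$; the telescoping argument then yields a tour from $x$ to $y$ with pairwise distinct interior vertices and cost at most $n$, which when $x=y$ is a closed tour --- the only sensible reading of ``simple path from $x$ to $y$'' in that situation. (The paper's own proof silently glosses over exactly the same definitional point.) With that remark repaired, your proof is complete.
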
 

\begin{proof}
Given a walk
  $x_0,x_1,\dots, x_n$ in $\com_X$ from $x=x_0$ to $y=x_n$ visiting
  all vertices in $A$, for each $i=1,\dots,n$ let $w_i$ be a path in
  $G$ from $x_{i-1}$ to $x_i$ of length
  $d(x_{i-1},x_i)$. Concatenating $w_1,\dots,w_n$ yields a walk in $G$
  from $x$ to $y$ of length $\sum_{i=1}^nd(x_{i-1},x_i)$. This shows
  the inequality $\tsp_{X}(x,A,y)\geq\tsp_G(x,A,y)$. For the reverse
  inequality, consider a walk $x_0,x_1,\dots, x_n$ in $G$ from $x=x_0$
  to $y=x_n$ visiting all vertices in $A$. Then $d(x_{i-1},x_i)=1$ for
  all $i$, and hence the total cost $\sum_{i=1}^nd(x_{i-1},x_i)$ is
  the length of the walk. It follows that
  $\tsp_G(x,A,y)\geq\tsp_{X}(x,A,y)$.
\end{proof}

Since our main concern is the embeddability of lamplighter spaces into $L_1$, and since $d_\symdif$ (the symmetric difference metric on
$\fin{X}$) is an $L_1$-metric, we shall focus our attention to the travelling salesman semi-metric $\tau_X$. For a ground metric space $(X,d_X)$, we will denote by $\Ts(X)$ the semi-metric space $(\fin{X}\times X,\tau_X)$. In order to embed $\La(X)$, it is sufficient to embed $\Ts(X)$. This fact will be used repeatedly in the ensuing arguments. 

\begin{lem}
  \label{lem:ts-to-lamplighter}
  Let $(X, d_X)$ be a metric space and
  $f\colon\Ts(X)\to L_1(\mu)$ be a function such that there exists
  a constant $D\ge 1$ with
  \[
\tau_X(u,v)  
  \leq \norm{f(u)-f(v)}_{1}
  \leq D\tau_X(u,v)
  \]
  for all $u,v\in\Ts(X)$. Then there is a measure $\nu$ and
  there is a function $\ft\colon\La(X)\to L_1(\nu)$ satisfying
  \[
d_{\La(X)}(u,v) \leq \norm{\ft(u)-\ft(v)}_{1} \leq
  D\cdot d_{\La(X)}(u,v)
  \]
  for all $u,v\in\La(X)$.
\end{lem}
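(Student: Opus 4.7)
The plan is to use the additive decomposition of the lamplighter metric, $d_{\La(X)}((A,x),(B,y)) = \tau_X((A,x),(B,y)) + |A \symdif B|$, coming directly from (\ref{eq:metric-lamplighter-distance}). The given $f$ already handles the travelling-salesman term with distortion $D$. Since the symmetric-difference metric $d_\symdif(A,B) := |A \symdif B|$ is a classical $L_1$-metric, the strategy is simply to adjoin to $f$ an isometric $L_1$-embedding of $d_\symdif$, via an $\ell_1$-direct sum.

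Concretely, I would set $g \colon \fin{X} \to \ell_1(X)$, $A \mapsto \1_A$, where $\ell_1(X) = L_1(X, \mu_\#)$ and $\mu_\#$ is the counting measure on $X$; a one-line computation then gives $\|g(A) - g(B)\|_1 = |A \symdif B|$. Next, let $\nu$ be the measure on the disjoint union of the measure space underlying $\mu$ and of $(X, \mu_\#)$, so that $L_1(\nu) = L_1(\mu) \oplus_1 \ell_1(X)$ canonically, and define $\ft \colon \La(X) \to L_1(\nu)$ by $\ft(A, x) := f(A, x) \oplus g(A)$, using that the underlying sets of $\La(X)$ and $\Ts(X)$ coincide.

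Finally, I would verify the distortion bounds directly. For $u = (A, x)$ and $v = (B, y)$,
$$\|\ft(u) - \ft(v)\|_{L_1(\nu)} = \|f(u) - f(v)\|_{L_1(\mu)} + |A \symdif B|,$$
and the hypotheses $\tau_X(u, v) \leq \|f(u) - f(v)\|_{L_1(\mu)} \leq D \tau_X(u, v)$ combined with $D \geq 1$ immediately sandwich the right-hand side between $\tau_X(u, v) + |A \symdif B| = d_{\La(X)}(u, v)$ and $D\big(\tau_X(u,v) + |A \symdif B|\big) = D \cdot d_{\La(X)}(u, v)$. There is no genuine obstacle here; the only subtle point is that the lower bound in the hypothesis is stated against the semi-metric $\tau_X$ rather than $d_{\La(X)}$, which is exactly what makes the two summands combine additively without inflating the distortion.
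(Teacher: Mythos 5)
Your proposal is correct and is essentially the paper's own proof: the authors also define $\ft(A,x)=\big(f(A,x),\1_A\big)$ into $L_1(\mu)\oplus_1\ell_1(X)$, note that the norm of a difference splits as $\|f(A,x)-f(B,y)\|_1+|A\symdif B|$, and conclude from the hypothesis on $f$ together with $D\geq 1$. No gaps.
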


\begin{proof}
  Define $\ft\colon\La(X)\to L_1(\mu)\oplus_1\ell_1(X)$ by letting
  \[
  \ft(A,x)=\big( f(A,x),\bi_A \big)\ .
  \]
  Then
  \[
  \norm{\ft(A,x)-\ft(B,y)}_1=\norm{f(A,x)-f(B,y)}_1+
\abs{A\symdif B}\ .
  \]
  The result now follows from the assumption on $f$ and from
  Definition~\eqref{eq:metric-lamplighter-distance} of the lamplighter metric.
\end{proof} 

\section{A lifting lemma for travelling salesman semi-metrics}
\label{sec:lifting}
In the remainder of the text a genuine metric or a semi-metric will simply be referred to as a metric, since the results and definitions below only rely on the triangle inequality and the symmetry property of metrics. Recall that $(X,d_X)$ embeds bi-Lipschiztly into $(Y,d_Y)$ if there exist a scaling factor $s>0$, a constant $D\geq 1$, and a map $f\colon X\to Y$ such that 
\begin{equation}
s\cdot d_X(x,y)\leq d_Y(f(x),f(y))\leq D\cdot s\cdot d_X(x,y).
\end{equation}

The parameter $c_Y(X)$, called the $Y$-distortion of $X$ is the infimum of such constant $D$ over all bi-Lipschitz embeddings $f$ as above.

From now on we will only consider finite metric spaces and finite graphs. Let $\mathsf{Y}$ be a class of metric spaces. For $D\geq1$, we say that a metric space $X$ admits a \emph{stochastic $D$-embedding into dominating $\mathsf{Y}$-metrics} if there exist non-negative numbers $p_1, p_2,\dots,p_k$ such that $\sum_{i=1}^k p_i=1$, and maps $f_i\colon X\to (Y_i,d_i)\in\mathsf{Y}$ such that for all $x,y\in X$
\begin{equation}\label{eq:lower}
d_i(f_i(x),f_i(y))\geq d_X(x,y) \textrm{ for all } 1\leq i\leq k
\end{equation}
and 
\begin{equation}\label{eq:upper}
\sum_{i=1}^k p_id_i(f_i(x),f_i(y))\leq Dd_X(x,y).
\end{equation}

Stochastic embeddings into dominating $\mathsf{Y}$-metrics were put forth in the context of dominating tree-metrics by Bartal \cite{Bartal96} following the work of Alon, Karp, Peleg and West \cite{AKPW95}. The definition above has a natural probabilistic interpretation in the sense that a metric space stochastically $D$-embeds into dominating $\mathsf{Y}$-metrics if there is a randomly chosen metric space $Y\in \mathsf{Y}$ and a randomly chosen map $f\colon X\to Y$ such that with probability $1$, $f$ is non-contracting and for all $x,y\in X$, 
\[\bE[d_Y(f(x),f(y))]\leq Dd_X(x,y).\]
Stochastic embeddings into dominating $L_1$-embeddable-metrics turn out to be extremely useful when trying to embed metric spaces into $L_1$ thanks to the following well-known fact.
\begin{lem}\label{lem:stochastic-to-deterministic}
Let $\mathsf{Y}$ be a collection of finite metric spaces and $c_1(\mathsf{Y}):=\sup_{Y\in\mathsf{Y}}c_{L_1}(Y)$. If $(X,d_X)$ admits a stochastic $D$-embedding into dominating $\mathsf{Y}$-metrics then $X$ bi-Lipschitzly embeds into $L_1$ with distortion at most $D\cdot c_1(\mathsf{Y})$.
\end{lem}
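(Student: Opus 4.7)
The plan is to convert the stochastic embedding into a single deterministic embedding into $L_1$ by taking an $\ell_1$-sum of bi-Lipschitz embeddings of each $Y_i$ into $L_1$, weighted by the probabilities $p_i$. Since an $\ell_1$-direct sum of $L_1$-spaces is itself an $L_1$-space (over the disjoint union of the underlying measure spaces), the output will indeed live in $L_1$.

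First, I would fix $\vare > 0$ and, for each $i \in \{1,\dots,k\}$, choose a bi-Lipschitz embedding $h_i \colon Y_i \to L_1(\mu_i)$ realizing distortion at most $c_1(\mathsf{Y}) + \vare$, normalized so that
\[
d_i(a,b) \leq \norm{h_i(a) - h_i(b)}_{1} \leq (c_1(\mathsf{Y}) + \vare)\, d_i(a,b)
\]
for all $a,b \in Y_i$. Setting $g_i := h_i \circ f_i \colon X \to L_1(\mu_i)$, the domination property \eqref{eq:lower} immediately yields $\norm{g_i(x)-g_i(y)}_1 \geq d_i(f_i(x),f_i(y)) \geq d_X(x,y)$, while the upper estimate for $h_i$ gives $\norm{g_i(x)-g_i(y)}_1 \leq (c_1(\mathsf{Y})+\vare)\, d_i(f_i(x),f_i(y))$.

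Next, I would define $F \colon X \to \bigl(\bigoplus_{i=1}^{k} L_1(\mu_i)\bigr)_{\ell_1}$ by
\[
F(x) = \bigl(p_1 g_1(x),\, p_2 g_2(x),\,\dots,\, p_k g_k(x)\bigr),
\]
so that
\[
\norm{F(x)-F(y)}_{1} = \sum_{i=1}^{k} p_i \norm{g_i(x)-g_i(y)}_{1}.
\]
The lower bound on each $g_i$ together with $\sum_i p_i = 1$ produces $\norm{F(x)-F(y)}_1 \geq d_X(x,y)$. For the upper bound, combining the Lipschitz estimate for each $g_i$ with the averaged inequality \eqref{eq:upper} gives
\[
\norm{F(x)-F(y)}_1 \leq (c_1(\mathsf{Y}) + \vare) \sum_{i=1}^{k} p_i\, d_i(f_i(x),f_i(y)) \leq (c_1(\mathsf{Y}) + \vare)\, D\, d_X(x,y).
\]
Letting $\vare \to 0$ (or applying a routine compactness argument since the $Y_i$ are finite) delivers distortion at most $D \cdot c_1(\mathsf{Y})$, and identifying the $\ell_1$-sum with $L_1$ of the disjoint union of the $\mu_i$ completes the proof.

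I do not expect any serious obstacle: the argument is purely linear and the main bookkeeping is the choice of normalization ensuring that $h_i$ has lower Lipschitz constant equal to $1$, so that the domination $d_i \geq d_X$ transfers cleanly to $\norm{g_i(\cdot)-g_i(\cdot)}_1 \geq d_X$. The mild subtlety is whether the infimum defining $c_{L_1}(Y_i)$ is attained, which is handled either by the $\vare$-slack above or by noting that for finite metric spaces the infimum is realized.
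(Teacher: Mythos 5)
Your proposal is correct and follows essentially the same route as the paper: compose the $f_i$ with (nearly) optimal embeddings of the $Y_i$ into $L_1$ normalized to be non-contracting, concatenate them with weights $p_i$ in an $\ell_1$-sum, and use \eqref{eq:lower} with $\sum_i p_i = 1$ for the lower bound and \eqref{eq:upper} for the upper bound. The only cosmetic difference is that you handle the near-optimality of the $g_i$ via an $\vare$-slack and a limit, whereas the paper takes an infimum over the choices of embeddings and remarks that it is attained since the spaces are finite; both resolutions are fine.
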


\begin{proof}
Let $p_1, p_2,\dots,p_k$, $(Y_1,d_1),\ldots,(Y_k,d_k)\in\mathsf{Y}$ and $f_i\colon X\to (Y_1,d_1),\ldots f_k:X\to (Y_k,d_k)$ witness the fact that $(X,d_X)$ admits a stochastic $D$-embedding into dominating $\mathsf{Y}$-metrics. For $i=1,\ldots,k$ pick an embedding $g_i:(Y_i,d_i)\to L_1$ for which, after eventually rescaling, we may assume that for all $y,z\in Y_i$ we have $d_i(y,z) \leq \|g_i(y) - g_i(z)\| \leq c_id_i(y,z)$. Define $f:(X,d_X)\to(\oplus_{i=1}^kL_1)_1$ given by $f(x) = (p_1g_1(f_1(x)),\ldots,p_kg_k(f_k(x)))$. A direct computation yields that for $x_1,x_2\in X$ we have
\[d_X(x_1,x_2) \leq \|f(x_1) - f(x_2)\| \leq D\big(\max_{1\leq i\leq k}c_i\big)d_X(x_1,x_2).\]
Taking an infimum over all choices of $g_1,\ldots,g_k$ (which is in fact  achievable by an elementary argument since $X$ is finite) yields the desired estimate.
\end{proof}

The travelling salesman (semi-)metric associated with a finite metric space $(X,d_X)$ can be conveniently reformulated as follows
\begin{equation}\label{eq:ts-metric}
\tau_X((A,s),(B,t))=\displaystyle\min_{P\in \cP(s, A\symdif B, t)} \sum_{(u,v)\in P}d_X(u,v),
\end{equation}
where the minimum is taken over all paths $P$ in $\cP(s, A\symdif B, t)$, the set of all simple paths in the complete graph over $(A\symdif B)\cup\{s,t\}$ connecting $s$ to $t$ and visiting all the vertices in $A\symdif B$ (exactly once). A path that achieves the minimum will be referred to as a \emph{minimizing path}, and $(u,v)\in P$ denotes an edge in $P$.

The following crucial fact, inspired by a similar fact which is pivotal in the study of transportation cost metrics and the Earthmover distance, shows that an embedding into random trees for the ground space can be lifted to the travelling salesman spaces over the corresponding metrics.
  
\begin{lem}\label{lem:lifting}
If $X$ admits a stochastic $D$-embedding into dominating $\mathsf{Y}$-metrics, then $\Ts(X)$ admits a stochastic $D$-embedding into dominating $\{\Ts(Y)\colon Y\in\mathsf{Y}\}$-metrics. 
\end{lem}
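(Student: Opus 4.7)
The plan is to lift the stochastic embedding of $X$ in the most direct way. If $(p_i, f_i\colon X \to Y_i)_{i=1}^k$ witnesses the stochastic $D$-embedding hypothesis for $X$, I would define $F_i \colon \Ts(X) \to \Ts(Y_i)$ by $F_i(A,x) = (f_i(A), f_i(x))$, assign the same probabilities $p_i$, and verify the two inequalities of the stochastic-embedding definition separately. The crucial preparatory observation is that every $f_i$ is automatically injective: the non-contraction hypothesis \eqref{eq:lower} together with $d_X(x,y) > 0$ for $x \neq y$ forces $f_i(x) \neq f_i(y)$. Consequently $f_i(A) \symdif f_i(B) = f_i(A \symdif B)$, and $f_i$ sets up a bijection between simple paths in $\com_X$ over $(A\symdif B)\cup\{x,y\}$ from $x$ to $y$, and simple paths in $\com_{Y_i}$ over $f_i((A\symdif B)\cup\{x,y\})$ from $f_i(x)$ to $f_i(y)$.

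For the domination direction, given any admissible path $P'$ on the $Y_i$-side, I would pull it back via $f_i^{-1}$ to a path $P$ on the $X$-side and bound each edge using non-contraction:
\[
\sum_{(u,v)\in P'} d_{Y_i}(u,v) \;\geq\; \sum_{(u',v')\in P} d_X(u',v') \;\geq\; \tau_X((A,x),(B,y)).
\]
Minimizing the left-hand side over all admissible $P'$ yields $\tau_{Y_i}(F_i(A,x), F_i(B,y)) \geq \tau_X((A,x),(B,y))$, as required.

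For the expected-distortion direction I would proceed in the opposite direction. Fix a minimizing path $x = x_0, x_1, \ldots, x_n = y$ in $\com_X$ realizing $\tau_X((A,x),(B,y))$, and observe that $f_i(x_0), \ldots, f_i(x_n)$ is an admissible (though generally not minimizing) path in $\com_{Y_i}$ for the lifted TSP problem. This already bounds $\tau_{Y_i}(F_i(A,x),F_i(B,y))$ above by $\sum_{j=1}^n d_{Y_i}(f_i(x_{j-1}), f_i(x_j))$. Averaging with respect to the $p_i$, swapping the two sums, and applying \eqref{eq:upper} edge by edge then gives
\begin{align*}
\sum_{i=1}^k p_i\, \tau_{Y_i}\bigl(F_i(A,x), F_i(B,y)\bigr)
&\leq \sum_{j=1}^n \sum_{i=1}^k p_i\, d_{Y_i}(f_i(x_{j-1}), f_i(x_j)) \\
&\leq D \sum_{j=1}^n d_X(x_{j-1}, x_j) \;=\; D\, \tau_X((A,x),(B,y)).
\end{align*}

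I do not anticipate any substantive obstacle; the argument is essentially a clean push-forward. The one subtlety worth highlighting is that the map $f_i$ must preserve admissibility of paths for the TSP problem, namely the symmetric difference, the endpoints, and the simplicity of the path. Each of these is underwritten by the automatic injectivity extracted at the outset from the non-contraction hypothesis, a point easy to miss. This parallels exactly the analogous reduction alluded to just before the lemma in the context of Earthmover and transportation-cost metrics.
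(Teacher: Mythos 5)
Your proposal is correct and follows essentially the same route as the paper: push the data forward via the (automatically injective) maps $f_i$, use the resulting bijection between admissible paths to pull back a minimizing path on the $Y_i$-side for the non-contraction inequality, and push forward a minimizing path on the $X$-side, average over the $p_i$, and apply the expected-distortion bound edge by edge for the other inequality. The only cosmetic difference is that you make the injectivity of $f_i$ (forced by the non-contraction hypothesis) explicit, whereas the paper uses it without comment.
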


\begin{proof}
Assume that $X$ admits a stochastic $D$-embedding into dominating $\mathsf{Y}$-metrics and let $(p_i)_{i=1}^k$ and $f_i\colon X\to (Y_i,d_i)$ as in the definition above. We will also denote $f_i$ the set-function that maps $A\subseteq X$ to $f_i(A)\subseteq Y_i$, and we will show that for all $(A,s),(B,t)$ in $\Ts(X)$
\begin{equation}
\tau_{Y_i}((f_i(A),f_i(s)),(f_i(B),f_i(t)))\geq \tau_X((A,s),(B,t))
\end{equation}
and 
\begin{equation}
\sum_{i=1}^k p_i\tau_{Y_i}((f_i(A),f_i(s)),(f_i(B),t))\leq D\cdot\tau_{X}((A,s),(B,t)).
\end{equation}

Since $f_i$ is injective, $f_i(A)\symdif f_i(B)\cup\{f_i(s),f_i(t)\}=f_i(A\symdif B\cup\{s,t\})$ and $f_i$ is a bijection between $A\symdif B\cup\{s,t\}$ and $f_i(A\symdif B\cup\{s,t\})$. Observe that a bijection between two complete graphs of equal size is automatically a graph isomorphism, and hence for any path in $\cP(s, A\symdif B,t)$ there is a corresponding path in $\cP(f_i(s), f_i(A\symdif B), f_i(t))$ and vice-versa. 

Let $\tilde{P}_i$ be a minimizing path in $\cP(f_i(s), f_i(A\symdif B), f_i(t))$ whose cost is $c(\tilde{P}_i):=\sum_{(u,v)\in \tilde{P}_i}d_{Y_i}(u,v).$ 
Let $P_i$ be the corresponding path in $\cP(s, A\symdif B,t\}$, i.e. $(x,y)\in P_i$ if and only if $(f_i(x),f_i(y))\in \tilde{P}_i$. Then,

\begin{align*}
c(\tilde{P}_i):=\sum_{(u,v)\in \tilde{P}_i}d_{Y_i}(u,v)=\sum_{(x,y)\in P_i}d_{Y_i}(f_i(x),f_i(y))\stackrel{\eqref{eq:lower}}{\geq} \sum_{(x,y)\in P_i} d_{X}(x,y)\stackrel{\eqref{eq:ts-metric}}{\geq} \tau_X((A,s),(B,t)),
\end{align*}
and hence $\tau_{Y_i}((f_i(A),f_i(s)),(f_i(B),f_i(t)))=c(\tilde{P}_i)\geq \tau_X((A,s),(B,t))$.

Now, let $P$ be a minimizing path in $\cP(s, A\symdif B,t\}$,  and let  $\tilde{P}_i$ be the corresponding path in $\cP(f_i(s), f_i(A\symdif B), f_i(t))$. Taking a convex combination with the weights $(p_i)_i$ we have
\begin{align*}
\sum_{i=1}^k p_i c(\tilde{P}_i)&= \sum_{i=1}^k p_i \sum_{(u,v)\in \tilde{P}_i}d_{Y_i}(u,v)=\sum_{i=1}^k p_i \sum_{(x,y)\in \tilde P_i}d_{Y_i}(f_i(x),f_i(y)) \textrm{ (by definition of $\tilde{P}_i$)}\\
					&=\sum_{(x,y)\in P} \sum_{i=1}^k p_i d_{Y_i}(f_i(x),f_i(y))\stackrel{\eqref{eq:upper}}{\leq} \sum_{(x,y)\in P} D\cdot d_{X}(x,y)\\
					&= D\cdot \tau_{X}((A,s),(B,t)) \textrm{ (by minimality of $P$)},
\end{align*}
and thus by taking minima on the left-hand side we have 
$$\sum_{i=1}^k p_i\tau_{Y_i}((f_i(A),f_i(s),(f_i(B),f_i(t))\leq D\cdot\tau_{X}((A,s),(B,t)).$$
\end{proof}

Combining Lemma \ref{lem:ts-to-lamplighter}, Lemma \ref{lem:stochastic-to-deterministic}, and Lemma \ref{lem:lifting} we obtain the following theorem. 

\begin{thm}\label{thm:main}
If $(X,d_X)$ admits a stochastic $D$-embedding into dominating $\mathsf{Y}$-metrics then $\La(X)$ embeds into $L_1$ with distortion at most $D\cdot \sup_{Y\in\mathsf{Y}}c_1(\Ts(Y))$.
\end{thm}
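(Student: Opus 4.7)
The plan is to chain together the three lemmas established in this section in order, observing that each output feeds directly into the input of the next. Starting from the hypothesis that $(X,d_X)$ admits a stochastic $D$-embedding into dominating $\mathsf{Y}$-metrics, I would first invoke Lemma~\ref{lem:lifting} to promote this to a stochastic $D$-embedding of $\Ts(X)$ into dominating $\mathsf{Z}$-metrics, where $\mathsf{Z} := \{\Ts(Y) : Y \in \mathsf{Y}\}$. Note that since each $Y\in\mathsf{Y}$ is finite (without loss of generality, as $X$ is finite and the $f_i$ can be taken to have finite image), each $\Ts(Y)$ is a finite semi-metric space, and the convention adopted at the start of the section permits treating $\Ts(Y)$ as a metric space for the purposes of the lemmas.

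Next, I would apply Lemma~\ref{lem:stochastic-to-deterministic} to the stochastic embedding produced in the previous step, using the class $\mathsf{Z}$. Since by definition $c_1(\mathsf{Z}) = \sup_{Y \in \mathsf{Y}} c_1(\Ts(Y))$, this yields a bi-Lipschitz embedding of $\Ts(X)$ into $L_1$ of distortion at most $D \cdot \sup_{Y \in \mathsf{Y}} c_1(\Ts(Y))$.

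Finally, I would feed this embedding of $\Ts(X)$ into Lemma~\ref{lem:ts-to-lamplighter}, which augments it with the indicator term $A \mapsto \bi_A$ accounting for the symmetric-difference part of the lamplighter metric. The resulting embedding of $\La(X)$ into $L_1$ inherits the distortion bound from the previous step, giving exactly the target estimate.

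There is no significant obstacle in the execution: all three lemmas have already been proved, the classes and constants align on the nose, and only minor bookkeeping is required to identify $\mathsf{Z}$ correctly in the second step. The conceptual weight of the theorem lies entirely in Lemma~\ref{lem:lifting}, where the combinatorial structure of the travelling salesman problem is shown to interact compatibly with stochastic embeddings of the ground space; once that is in place, the present theorem is essentially a formality.
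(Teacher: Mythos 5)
Your proposal is correct and follows exactly the paper's argument, which obtains the theorem by combining Lemma~\ref{lem:lifting}, Lemma~\ref{lem:stochastic-to-deterministic}, and Lemma~\ref{lem:ts-to-lamplighter} in precisely the order you describe. The identification of the class $\{\Ts(Y):Y\in\mathsf{Y}\}$ and the resulting constant $D\cdot\sup_{Y\in\mathsf{Y}}c_1(\Ts(Y))$ match the paper's reasoning on the nose.
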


The metrics appearing when embedding into random trees are metrics supported on \emph{weighted} trees. Recall that a metric $d_w$ is \emph{supported} on a graph $G=(V,E)$ if the metric is the shortest path metric on $G$ with respect to some weight function $w\colon E\to [0,\infty)$. We always assume that the weight function is reduced, i.e. $d_w(x,y)=w((x,y))$ whenever $(x,y)\in E$. We will simply say \emph{tree-metric} for a metric that is supported on a tree. We will need the following proposition which is a weighted version of a result that can be shown in exactly the same way as the unweighted version  \cite[Theorem A and subsequent paragraph]{bmsz:19}, but also  deduced from it by subdividing edges to approximate a finite weighted graph by one with constant weights. 

\begin{prop}\label{prop:tree-embedding}
Let $d_w$ be a metric supported on a tree $T$. Then $\Ts((T,d_w))$ admits a bi-Lipschitz embedding into $L_1$ with distortion at most~6. 
\end{prop}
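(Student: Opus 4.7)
The plan is to reduce the weighted case to the unweighted one cited from \cite{bmsz:19} via edge subdivision. Since $T$ is finite (we only need to embed the part of $\Ts(T,d_w)$ whose first coordinates live in $T$), we first argue that it suffices to treat the case where each weight $w(e)$ is a positive rational number. Irrational weights can be handled at the end by a compactness argument: for any $\varepsilon>0$, approximate $w$ by a rational weight function $w'$ with $(1-\varepsilon)d_w \leq d_{w'} \leq (1+\varepsilon)d_w$, observe that the metric TSP depends only on pairwise distances so that $\tau_{(T,d_w)}$ and $\tau_{(T,d_{w'})}$ enjoy the same bi-Lipschitz comparison, and extract a limit of the corresponding $L_1$-embeddings (which can be taken to land in a finite-dimensional $\ell_1$ since the space is finite). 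So the distortion $6$ obtained for rational weights transfers to the general case.

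Assuming $w$ is rational-valued, I first scale $w$ uniformly so that every weight is a positive integer (scaling does not affect distortion). I then define an unweighted tree $\widetilde T$ by subdividing each edge $e$ of $T$ into $w(e)$ edges of weight $1$; every vertex of $T$ is still a vertex of $\widetilde T$, and the shortest-path metric of $\widetilde T$ restricted to $V(T)$ coincides with $d_w$.

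The key step is to show that the inclusion $\iota\colon\Ts((T,d_w))\to\Ts(\widetilde T)$ defined by $\iota(A,x)=(A,x)$ is an isometric embedding of semi-metrics. By formula~\eqref{eq:ts-metric}, for $(A,s),(B,t)\in\Ts((T,d_w))$ the value $\tau_{(T,d_w)}((A,s),(B,t))$ is the minimum cost of a Hamiltonian path in the complete graph on $(A\symdif B)\cup\{s,t\}$ with edge costs $d_w$; likewise $\tau_{\widetilde T}(\iota(A,s),\iota(B,t))$ is the analogous minimum with respect to $d_{\widetilde T}$. Since $(A\symdif B)\cup\{s,t\}\subseteq V(T)$ and the two metrics agree there, the two optimization problems are identical, hence $\iota$ is an isometric embedding.

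Finally, applying the unweighted version of the result from \cite{bmsz:19}, $\Ts(\widetilde T)$ embeds into $L_1$ with distortion at most $6$; precomposing with $\iota$ yields a bi-Lipschitz embedding of $\Ts((T,d_w))$ into $L_1$ with distortion at most $6$. The main subtlety to track is ensuring that after subdivision the added Steiner-type vertices never need to be visited by an optimal TSP tour, which is precisely the content of the observation that $\tsp$ depends only on the pairwise distances of the points to be visited; beyond that, the proof is essentially a clean transfer through the isometry $\iota$.
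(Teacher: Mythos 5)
Your proof is correct, and it carries out in detail the route that the paper only mentions in passing: the paper remarks that the proposition ``can be shown in exactly the same way as the unweighted version'' of \cite{bmsz:19} ``but also deduced from it by subdividing edges to approximate a finite weighted graph by one with constant weights,'' and what it then actually displays is the first route --- the explicit weighted analogue of the embedding of \cite{bmsz:19}, namely the formula $\tsp_T(x,A,y)=2\sum_{e\in[x,A]\setminus[x,y]}w_e+\sum_{e\in[x,y]}w_e$ together with the map $f$ into $\ell_1(I)$ with entries $w_e$ and the extra root-path coordinates, leaving the verification of the two-sided inequalities to the reader. You instead take the subdivision route: scale rational weights to integers, subdivide each edge into unit edges, and use that $\tau$ as defined in \eqref{eq:ts-metric} depends only on the restriction of the ambient metric to $(A\symdif B)\cup\{s,t\}\subset V(T)$, on which $d_{\widetilde T}$ and $d_w$ agree, so that $\Ts((T,d_w))\hookrightarrow\Ts(\widetilde T)$ is isometric --- which is also why your closing worry about Steiner-type subdivision vertices is a non-issue by definition of the metric TSP. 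The trade-off is clear: the paper's route treats arbitrary real weights in one stroke and yields an explicit embedding, at the cost of redoing (or asserting) the weighted estimates from \cite{bmsz:19}; your route uses the unweighted theorem as a genuine black box, at the cost of the rational-approximation and limiting step, which is legitimate here because $\Ts((T,d_w))$ is a finite (semi-)metric space and finite subsets of $L_1$ embed isometrically into $\ell_1^N$ with $N$ depending only on the number of points, so the distortion-$6(1+\varepsilon)/(1-\varepsilon)$ embeddings admit a pointwise convergent subsequence; alternatively one can skip the limit, since the bound $c_1(\Ts((T,d_w)))\leq 6$ is all that Theorem \ref{thm:main} and Corollary \ref{cor:main} require. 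One small caveat to record: the multiplicative approximation by positive rational weights implicitly uses that every edge weight is strictly positive, which is automatic when $d_w$ is a genuine (reduced) tree metric but would need a separate (easy) contraction argument if zero-weight edges were allowed.
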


For the benefit of the reader we give an explicit description of the bi-Lipschitz embedding in the above Proposition while omitting the proof of its properties. We recall the necessary pieces of notation from \cite{bmsz:19}. For vertices $x$ and $y$ in a tree $T$ we denote by $[x,y]$ the set of edges on the unique path from $x$ to $y$, and for $A\subset T$ we let
$[x,A]=\bigcup_{a\in A} [x,a]$. The weighted analogue of \cite[Theorem 3]{bmsz:19} can be stated as follows:
for $x,y\in T$ and a finite $A\subset T$,
  \[
  \tsp_{T}(x,A,y)=2\sum_{e\in [x,A]\setminus [x,y]}w_e+\sum_{e\in[x,y]}w_e.
  \]

  For $C\subset T$ let $[C]=\bigcup _{x,y\in C}[x,y]$ be the minimal set of
edges needed to travel between different vertices of $C$. Define
\[
I=\big\{ (e,C):\, e\in E(T),\ \emptyset\neq C\subset T,\ C\text{
  finite},\ e\notin [C]\big\}\ .
\]
For $A\subset T$, $x\in T$ and $e\in E(T)$, let
$A_{x,e}=\big\{a\in A:\, e\in[x,a]\big\}$.

Define
  $f\colon \Ts(T)\to \ell_1(I)$ by
  \[
  f(A,x)=(f(A,x)_i)_{i\in I}\ ,
  \]
  where $f(A,x)_i=w_e$ if there exists  $e\in E(T)$ such that $A_{x,e}\neq\emptyset$ and $i=(e,A_{x,e})$, and $f(A,x)_i=0$ otherwise. An argument similar to the one given in the proof of Lemma~9 in \cite{bmsz:19}, will show that
\begin{equation}\label{eq:inequalities}
\sum_{e\in[x,A\symdif B]\setminus [x,y]}w_e \leq \|
  f(A,x)-f(B,y)\|_1 \leq 2\sum_{e\in[x,A\symdif B]\setminus[x,y]} w_e+
  2\sum_{e\in[x,y]}w_e\ .
\end{equation}
Fix a root $r\in T$ and define
  $g\colon \Ts(T)\to \ell_1(I)\oplus \ell_1(E(T))$ by $g(A,x)=\big(f(A,x),(w_{e})_{e\in[r,x]}\big)$.
It can then be verified, as done in \cite{bmsz:19}, that $g$ is a bi-Lipschitz embedding with distortion at most~6. The next corollary follows immediately from Theorem \ref{thm:main} and Proposition \ref{prop:tree-embedding}.

\begin{cor}\label{cor:main}
If $(X,d_X)$ admits a stochastic $D$-embedding into dominating tree-metrics then $\La(X)$ embeds into $L_1$ with distortion at most $6D$.
\end{cor}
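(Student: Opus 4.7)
The plan is a direct combination of Theorem \ref{thm:main} with Proposition \ref{prop:tree-embedding}. Specifically, I would take the class $\mathsf{Y}$ in Theorem \ref{thm:main} to consist of all finite tree-metrics (that is, shortest-path metrics on finite weighted trees). Then the hypothesis of the corollary is precisely the hypothesis of Theorem \ref{thm:main} for this choice of $\mathsf{Y}$.

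First I would invoke Proposition \ref{prop:tree-embedding}, which asserts that for every finite weighted tree $(T,d_w)$ the travelling salesman semi-metric space $\Ts(T,d_w)$ bi-Lipschitz embeds into $L_1$ with distortion at most $6$. This gives the uniform bound
\[
\sup_{Y \in \mathsf{Y}} c_1(\Ts(Y)) \leq 6.
\]
Substituting this into the conclusion of Theorem \ref{thm:main} immediately yields that $\La(X)$ bi-Lipschitz embeds into $L_1$ with distortion at most $D \cdot 6 = 6D$, as desired.

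There is essentially no obstacle in this proof, since all the non-trivial content has been established in the preceding two results and the argument is a one-line substitution. The only point worth flagging is that it is important that Proposition \ref{prop:tree-embedding} is stated for \emph{weighted} trees rather than merely combinatorial trees, since the trees produced by stochastic embeddings into dominating tree-metrics typically carry non-trivial edge weights reflecting the scales at which they approximate $X$; without the weighted version of the tree result one could not feed the output of Bartal-type stochastic embeddings directly into Theorem \ref{thm:main}.
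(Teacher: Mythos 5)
Your proof is correct and follows exactly the paper's route: the corollary is obtained by applying Theorem~\ref{thm:main} with $\mathsf{Y}$ the class of (weighted) tree-metrics and using Proposition~\ref{prop:tree-embedding} to bound $\sup_{Y\in\mathsf{Y}}c_1(\Ts(Y))$ by $6$. Your remark about needing the weighted version of the tree result is also precisely why the paper states Proposition~\ref{prop:tree-embedding} for metrics supported on weighted trees.
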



\section{Applications and limitations}
\label{sec:app}

We now describe upper bounds on the $L_1$-distortion of lamplighter metrics that can be obtained from Corollary \ref{cor:main} in combination with upper bounds on the distortion to embed the ground spaces into random trees. We start with a general upper bound valid for all lamplighter metrics. Bartal's original upper bound \cite{Bartal96} on the distortion to stochastically embed a $n$-point metric space into dominating tree-metrics was $O(\log^2 n)$. It was improved to $O(\log(n)\log\log n)$ in \cite{Bartal99} and \cite{CCGGP99}. Fakcharoenphol, Rao, and Talwar \cite{FRT04} obtained the tight $O(\log n)$ upper bound. This bound is optimal since expander graphs \cite{LLR95}, or Khot-Naor spaces \cite{KhotNaor06}, cannot embed with a better distortion into $L_1$.

\begin{cor}\label{cor:general}
Let $(X,d_X)$ be a finite metric space with $|X|=n$. Then $\La(X)$ bi-Lipschitzly embeds into $L_1$ with distortion $O(\log n)=O(\log\log |\La(X)|)$.
\end{cor}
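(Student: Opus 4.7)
The plan is a direct combination of Corollary~\ref{cor:main} with the Fakcharoenphol--Rao--Talwar theorem recalled in the paragraph preceding the statement. First I would invoke FRT to obtain, for the $n$-point metric space $(X,d_X)$, a stochastic $D$-embedding into dominating tree-metrics with $D = O(\log n)$. Feeding this into Corollary~\ref{cor:main} immediately produces a bi-Lipschitz embedding of $\La(X)$ into $L_1$ with distortion at most $6D = O(\log n)$, which gives the first of the two claimed estimates.

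For the second equality $O(\log n) = O(\log\log|\La(X)|)$, only a routine cardinality calculation is needed. The underlying set of $\La(X)$ is $\fin{X}\times X$, which has size $n\cdot 2^n$, so $\log|\La(X)| = n\log 2 + \log n$ and hence $\log\log|\La(X)| = \log n + O(1)$. Thus the two bounds are of the same asymptotic order.

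There is no real obstacle here: all of the depth of the statement lies in the FRT theorem on the one hand, and in Corollary~\ref{cor:main} on the other (which in turn rests on the lifting Lemma~\ref{lem:lifting} and the tree embedding of Proposition~\ref{prop:tree-embedding}). Once these are in hand, the corollary is a one-line consequence together with the elementary cardinality check above. If one wanted to track constants one could quote the currently best explicit constant in FRT and multiply by the factor $6$ coming from Proposition~\ref{prop:tree-embedding}, but since the statement is asymptotic no sharper bookkeeping is required.
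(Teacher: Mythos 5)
Your proposal is correct and follows exactly the route the paper intends: Corollary~\ref{cor:main} applied to the Fakcharoenphol--Rao--Talwar $O(\log n)$ stochastic embedding into dominating tree-metrics, plus the observation that $|\La(X)| = 2^n n$, so $\log\log|\La(X)| = \log n + O(1)$. Nothing further is needed.
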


Note that since $\La(X)$ has $2^{n}n$ vertices the upper bound above is an exponential improvement over Bourgain's upper bound for arbitrary finite metric spaces. The upper bound in Corollary \ref{cor:general}
is tight since $\La(X)$ contains an isometric copy of $X$ and thus $c_1(\La(X))\geq c_1(X)$, and $c_1(X)=\Omega(\log |X|)$ for expander graphs, or Khot-Naor spaces.

When the ground metric is supported on a graph with some topological constraints, improved upper bounds can be obtained in some cases. A graph is said to be \emph{outerplanar} (or $1$-outerplanar) if it is planar (i.e. it can be drawn on the plane without edges crossing) with an embedding in the plane such that all its vertices lie on the unbounded face. For $k\geq 2$, it is $k$-outerplanar if it is planar and by removing the vertices on the unbounded face we obtain a $(k-1)$-outerplanar graph. The $k\times n$ grid is an example of a $k$-outerplanar graph. The following corollary follows from the distortion of the stochastic embeddings by Gupta, Newman, Rabinovich and Sinclair in \cite{GNRS04} for $k=1$, and from Chekuri et al. in \cite{CGNRS06} for $k\geq 2$, of $k$-outerplanar graphs into $L_1$.

\begin{cor}\label{cor:outerplanar}
Let $k\geq 1$. Let $d_w$ be a metric supported on a finite $k$-outerplanar graph $G$. Then the lamplighter space
$\La((G,d_w))$ bi-Lipschitzly embeds into $L_1$ with distortion $O_k(1)$.
\end{cor}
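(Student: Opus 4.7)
The plan is to apply Corollary~\ref{cor:main} as a black box. That corollary reduces the problem of bi-Lipschitzly embedding $\La(X)$ into $L_1$ to producing, for the ground space $X$, a stochastic embedding into dominating tree-metrics with good distortion. Concretely, if I can exhibit a constant $D_k$ depending only on $k$ such that every finite $k$-outerplanar graph equipped with an arbitrary edge-weight metric admits a stochastic $D_k$-embedding into dominating tree-metrics, then Corollary~\ref{cor:main} yields a bi-Lipschitz embedding of $\La((G,d_w))$ into $L_1$ with distortion at most $6D_k = O_k(1)$.

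The required stochastic embedding of $(G,d_w)$ into dominating tree-metrics is not something that has to be built from scratch here; it is precisely the content of the cited literature. For $k=1$, the result of Gupta, Newman, Rabinovich and Sinclair \cite{GNRS04} furnishes a probabilistic embedding of any outerplanar metric into dominating tree-metrics with distortion $O(1)$. For $k\geq 2$, Chekuri, Gupta, Newman, Rabinovich and Sinclair \cite{CGNRS06} extend this to $k$-outerplanar graphs with distortion $O_k(1)$. Both constructions work in the weighted setting and produce exactly the type of distribution over dominating tree-metrics that fits the definition of stochastic embedding used in Section~\ref{sec:lifting}.

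Given how cleanly the two inputs fit together, there is essentially no new technical obstacle in proving the corollary itself: the heavy lifting has already been carried out in \cite{GNRS04} and \cite{CGNRS06}, and the lifting step is provided by Corollary~\ref{cor:main}. The only point that would deserve a short verification is that the notion of \emph{probabilistic embedding into trees} used in those papers matches the notion of \emph{stochastic embedding into dominating tree-metrics} used here, namely, a convex combination of non-contracting maps into tree-metrics whose expected distance is bounded by $D$ times the original distance; this is a standard and purely definitional unpacking. Chaining these two facts produces the advertised $O_k(1)$ distortion.
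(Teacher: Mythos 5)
Your proposal is correct and follows exactly the paper's route: the paper likewise obtains Corollary~\ref{cor:outerplanar} by feeding the stochastic embeddings into dominating tree-metrics of outerplanar graphs from \cite{GNRS04} (for $k=1$) and of $k$-outerplanar graphs from \cite{CGNRS06} (for $k\geq 2$) into Corollary~\ref{cor:main}. The only detail the paper adds beyond this is the explicit bookkeeping that the resulting distortion is at most $8\cdot 6$ for $k=1$ and exponential in $k$ for $k\geq 2$.
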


Note that for $k=1$ the distortion is at most~8 (see \cite[paragraph after the proof of Lemma 5.3]{GNRS04}), while for $k\geq 2$ the dependance is exponential in $k$ (see \cite[Theorem 1.1]{CGNRS06}).

The next corollary follows from \cite[Theorem 5.5]{GNRS04} where a probabilistic embedding into trees was devised for finite graphs and whose distortion depends on the graph's Euler number, namely $\chi(G)=|E|-|V|+1$. 
\begin{cor}\label{cor:Euler}
Let $G$ be a finite graph with Euler characteristic $\chi(G)$. Then the lamplighter graph metric on $G$ bi-Lipschitzly embeds into $L_1$ with distortion $O(\log(\chi(G))$.
\end{cor}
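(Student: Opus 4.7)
The plan is to combine Corollary~\ref{cor:main} with the probabilistic tree embedding of Gupta, Newman, Rabinovich and Sinclair \cite[Theorem 5.5]{GNRS04}, treating the corollary purely as a black-box consequence of the general lifting mechanism established in the previous section.

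First, I would view the finite graph $G$ as a metric space $(X,d_X)$ by equipping its vertex set with the shortest-path metric (viewing edges as unweighted, or more generally considering the graph metric $d_{\La(G)}$ as defined via $\tsp_G$ in the combinatorial setting of Section~\ref{sec:lamp-tsp}). Since the graph metric on $G$ agrees with the travelling salesman semi-metric on the associated metric space (by the Lemma relating $\tsp_G$ and $\tsp_X$), the lamplighter graph $\La(G)$ and the lamplighter metric space $\La(X)$ coincide, so bi-Lipschitz $L_1$-embeddings of one give bi-Lipschitz $L_1$-embeddings of the other with the same distortion.

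Next, I would invoke \cite[Theorem 5.5]{GNRS04}, which asserts that any finite graph $G$ admits a stochastic embedding into dominating tree-metrics with distortion $D = O(\log \chi(G))$; in the notation of Section~\ref{sec:lifting}, this yields weights $p_1,\dots,p_k$ and maps $f_i\colon X\to (T_i,d_i)$ into trees satisfying inequalities \eqref{eq:lower} and \eqref{eq:upper} with $D = O(\log \chi(G))$.

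Finally, I would plug this into Corollary~\ref{cor:main}, which is the main output of Section~\ref{sec:lifting}: since $X$ admits a stochastic $D$-embedding into dominating tree-metrics, $\La(X)$ embeds bi-Lipschitzly into $L_1$ with distortion at most $6D = O(\log \chi(G))$. This completes the argument. The only substantive content is the appeal to \cite{GNRS04} for the stochastic tree approximation; the rest is an immediate application of the lifting machinery developed in Sections~\ref{sec:lamp-tsp} and \ref{sec:lifting}, so I do not anticipate any obstacle beyond correctly quoting the GNRS bound and noting that unweighted graph metrics on $G$ fall within its scope.
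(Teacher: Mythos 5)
Your proposal is correct and matches the paper's argument: the corollary is obtained exactly by quoting the stochastic tree approximation of \cite[Theorem 5.5]{GNRS04}, whose distortion is $O(\log\chi(G))$ for the graph metric on $G$, and feeding it into Corollary~\ref{cor:main} (via the identification of the lamplighter graph metric with the lamplighter metric over the graph metric space from Section~\ref{sec:lamp-tsp}). Nothing further is needed.
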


Lee and Sidiropoulos \cite{LeeSidi13} proved that every metric supported on a finite graph with pathwidth $k$ admits a stochastic embedding into dominating tree-metrics with distortion $O_k(1)$. Since it is not needed in the sequel, we will not recall any of the numerous equivalent definitions of the pathwidth of a graph, and we will simply say that it is the minimum width over all path decompositions of $G$ (these are tree decompositions of a special type). The class of pathwidth-$1$ graphs are special type of trees. The $k$-th level diamond graphs are examples of graphs with pathwidth $O(k)$.

\begin{cor}\label{cor:pathwidth}
Let $k\geq 2$ and $d_w$ be a metric supported on a finite graph with pathwidth $k$. Then
$\La((G,d_w))$ bi-Lipschitzly embeds into $L_1$ with distortion $O_k(1)$.
\end{cor}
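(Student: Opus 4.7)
The plan is to combine the Lee--Sidiropoulos stochastic tree embedding for pathwidth-$k$ graphs with Corollary \ref{cor:main} of the present paper. This is the same pattern as for Corollaries \ref{cor:outerplanar} and \ref{cor:Euler}, where an external stochastic embedding result is plugged into the lifting machinery developed in Section \ref{sec:lifting}. The statement is essentially a direct consequence, so the only thing to verify is that the hypothesis of Corollary \ref{cor:main} is met and then to track the constant.

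First, I invoke the main result of Lee and Sidiropoulos: any finite metric $d_w$ supported on a graph $G$ of pathwidth $k$ admits a stochastic $D_k$-embedding into dominating tree-metrics, where $D_k$ is a constant depending only on $k$. This gives probabilities $(p_i)_{i=1}^m$, weighted trees $(T_i,d_i)$ and maps $f_i\colon (G,d_w)\to (T_i,d_i)$ satisfying \eqref{eq:lower} and \eqref{eq:upper} with constant $D_k$ in place of $D$.

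Second, I apply Corollary \ref{cor:main} to this stochastic embedding. By that corollary, $\La((G,d_w))$ embeds bi-Lipschitzly into $L_1$ with distortion at most $6D_k$. Since $D_k$ depends only on $k$, the distortion is $O_k(1)$, which is exactly what is claimed. The two ingredients chain together because Corollary \ref{cor:main} was precisely designed to convert a stochastic embedding into dominating tree-metrics of the ground space into an $L_1$-embedding of the lamplighter space, using the lifting Lemma \ref{lem:lifting} to pass from the ground metric to $\Ts$, and Proposition \ref{prop:tree-embedding} to handle the tree case with the universal constant $6$.

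There is no real obstacle here: the lifting lemma does all the heavy lifting, and the tree case is already settled by the weighted analogue of the result from \cite{bmsz:19} recorded in Proposition \ref{prop:tree-embedding}. The only mild subtlety is purely bookkeeping, namely that the Lee--Sidiropoulos construction produces \emph{weighted} trees (not just combinatorial trees with unit edge-lengths), which is why it matters that Proposition \ref{prop:tree-embedding} is stated for arbitrary weighted tree-metrics. With that in hand, the proof is essentially a one-liner: stochastic $D_k$-embedding into dominating tree-metrics in, $L_1$-embedding of the lamplighter with distortion $6D_k$ out.
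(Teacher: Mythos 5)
Your proposal is correct and matches the paper's reasoning exactly: the paper derives this corollary by plugging the Lee--Sidiropoulos stochastic embedding of pathwidth-$k$ metrics into dominating tree-metrics (with distortion $O_k(1)$) into Corollary \ref{cor:main}, yielding $L_1$-distortion $6D_k=O_k(1)$. Your remark about the weighted-tree issue being covered by Proposition \ref{prop:tree-embedding} is also the same point the paper addresses.
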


Since by a result of Robertson and Seymour \cite{RobertsonSeymour83}, the class of graphs which do not have a tree $T$ as a minor have finite pathwidth, we also have:

\begin{cor}\label{cor:tree-excluded}
Let $T$ be a tree and let $d_w$ be any metric supported on a finite graph which does not have $T$ as a minor. Then
$\La((G,d_w))$ bi-Lipschitzly embeds into $L_1$ with distortion $O(1)$.
\end{cor}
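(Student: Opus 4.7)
The plan is to combine the Robertson--Seymour structural result on tree minors with Corollary \ref{cor:pathwidth}. Specifically, by \cite{RobertsonSeymour83}, for every fixed tree $T$ there exists a constant $k=k(T)$, depending only on $T$, such that every finite graph $G$ that does not contain $T$ as a minor has pathwidth at most $k$. Since $T$ is fixed, $k$ is an absolute constant for the purposes of this statement.

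Given any such graph $G$ and any metric $d_w$ supported on $G$, Corollary \ref{cor:pathwidth} (applicable once $k\geq 2$; the case $k=1$ reduces to trees, for which Proposition \ref{prop:tree-embedding} already yields a distortion at most $6$) furnishes a bi-Lipschitz embedding of $\La((G,d_w))$ into $L_1$ with distortion $O_k(1)$. Because $k$ depends only on $T$, this distortion is a constant $C=C(T)$, i.e.\ $O(1)$, as claimed.

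There is no real obstacle here: the content of the statement is entirely in the two ingredients being cited. The only minor point worth checking is that Corollary \ref{cor:pathwidth} applies uniformly to arbitrary weighted metrics supported on graphs of pathwidth at most $k$, not merely to unweighted graph metrics. This holds because the underlying stochastic embedding into dominating tree-metrics of Lee--Sidiropoulos \cite{LeeSidi13} is formulated for arbitrary metrics on graphs of bounded pathwidth, and the lifting mechanism encoded in Corollary \ref{cor:main} preserves this generality: the distortion bound one obtains depends only on the stochastic distortion of the ground metric into dominating tree-metrics and on the universal constant at most $6$ coming from Proposition \ref{prop:tree-embedding}.
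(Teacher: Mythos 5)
Your proposal is correct and follows essentially the same route as the paper: the paper also derives Corollary~\ref{cor:tree-excluded} directly from the Robertson--Seymour theorem (excluding a fixed tree $T$ as a minor forces pathwidth bounded by a constant depending only on $T$) combined with Corollary~\ref{cor:pathwidth}, so the distortion $O_k(1)$ becomes $O(1)$ for fixed $T$. Your additional remarks on the $k=1$ case and on the Lee--Sidiropoulos result applying to arbitrary weighted metrics are accurate and only make explicit what the paper leaves implicit.
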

 
 Approximating arbitrary metrics by tree-metrics is a powerful tool as can be seen from the previous discussion. It has already been noticed above that the general upper bound from Corollary \ref{cor:general} is incompressible. However, improvements are achievable for restricted classes of metrics as shown by Corollary \ref{cor:outerplanar}, Corollary \ref{cor:Euler}, Corollary \ref{cor:pathwidth} and Corollary \ref{cor:tree-excluded}. Unfortunately, some super-constant upper bounds are also tight for classical graphs with some severe topological restrictions. For instance, the lamplighter metric over the $k$-th level diamond graph embeds into $L_1$ with distortion $O(k)$ by Corollary \ref{cor:general} (Corollary \ref{cor:pathwidth} would guarantee an upper bound no better than  $(4k)^{k^3+1}$, since it builds uppon  \cite[Theorem 1.4]{LeeSidi13}).
 We could hope, since the diamond graphs are planar graphs, and moreover series-parallel graphs, that embeddings into random trees with significantly much lower distortion could be constructed. This hope was ruled out by Gupta et al. \cite{GNRS04}. Indeed, they showed that any embedding into random trees of the $k$-th level diamond graph will suffer a $\Omega(k)$ distortion. Similarly, it was shown by Konjevod, Ravi and Salman \cite{KRS01} that the $n\times n$ grid cannot be embedded into random trees with distortion better than $\Omega(\log n)$.
 This latter result (which extends a previous result of Alon et al. \cite{AKPW95} where the random trees are spanning trees) rules out the possibility of using stochastic embeddings into dominating tree-metrics to answer positively the Naor-Peres planar lamplighter problem.

 \section{Coarse embeddings of lamplighter graphs over $d$-dimensional lattices}
 \label{sec:coarse}
 A finitely generated group $H$ has polynomial growth $\beta\geq 1$ if there are constants $a,b>0$ such that for all $r\geq1$
\begin{equation*}\label{eq:polygrowth}
ar^\beta\leq |B_H(e_H,r)|\leq br^\beta,
\end{equation*} 
where  $B_H(x,r)\coloneqq \left\{y\in H:\
d_H(x,y)\leq r\right\}$, $d_H(\cdot,\cdot)$ the distance with respect to the Cayley graph in $H$,  and $e_H$ is the identity element of $H$. Naor and Peres showed \cite[Proof of Theorem 3.1]{naor-peres:11} that if $\Gamma$ is a finitely generated group with polynomial growth at least quadratic then for every $\varepsilon\in(0,1)$ there exist a constant $C:=C(\Gamma,\varepsilon)>0$ and a coarse embedding  $f\colon \bZ_2\bwr \Gamma\to L_1$ such that for all $u,v\in \bZ_2\bwr\Gamma$, 
\begin{equation}\label{eq:compression}
d_{\bZ_2\bwr\Gamma}(u,v)^{1-\varepsilon}\leq \|f(u)-f(v)\|_1\leq Cd_{\bZ_2\bwr\Gamma}(u,v). 
\end{equation}
In the language of geometric group theory, it follows from inequality \eqref{eq:compression} that $\alpha_1(\bZ_2\bwr\Gamma)=1$, where the numerical parameter $\alpha_1(\Gamma)$, introduced by Guentner and Kaminker \cite{GK04}, denotes the $L_1$-compression of the finitely generated group $\Gamma$. In the same paper, Naor and Peres raised the following problem.
\begin{mainproblem*}[\cite{naor-peres:11}*{Question 10.1}]\label{P:A}
Does the planar lamplighter group $\bZ_2\wr\bZ^2$ embed bi-Lipschitzly into $L_1$?
\end{mainproblem*}

Thus, Problem \ref{P:A} asks whether we can find a map that satisfies inequality \eqref{eq:compression} with $\varepsilon=0$. 

A careful analysis of the proof in \cite{naor-peres:11}, reveals that $C(\Gamma,\e)=O(\e^{-c})$ for some constant $c>0$, and this particular feature of the embedding has some important consequences\footnote{We thank Assaf Naor for pointing this out to us.}. By setting $\e=1/\log n$ (where $\log (x)=\log_2(x)$), and observing that $n^{\frac{1}{\log n}}=2$, it is thus possible to construct maps $f_n\colon \bZ_2\bwr \Gamma\to L_1$ such that, 
\begin{equation}\label{eq:diam}
\frac{d_{\bZ_2\bwr \Gamma}(u,v)}{\log^c n}\leq
\|f_{n}(u)-f_{n}(v)\|_1\lesssim d_{\bZ_2\bwr \Gamma}(u,v),
\end{equation}
whenever $d_{\bZ_2\bwr \Gamma}(u,v)\leq n$.

Now, observing that $\bZ_n^d$ embeds bi-Lipschitzly into $\llbracket n \rrbracket^{2d}\subset \bZ^{2d}$ with distortion at most~$7$ (see for instance \cite[Lemma 6.12]{MendelNaor08} for even more precise estimates) it follows that $\bZ_2\bwr\bZ_n^d$ bi-Lipschitzly embeds into $\bZ_2\bwr\bZ^{2d}$ with distortion at most~18 (see \cite[Section 5]{bmsz:19}). Finally, the diameter of $\bZ_2\bwr\bZ_n^d$ being $O(n^d)$, it follows from  \eqref{eq:diam} for $\Gamma=\bZ^d$ that $c_1(\bZ_2\bwr\bZ_n^d)=O_d(\log^cn)$. For an arbitrary group $\Gamma$ with polynomial growth, we can take $c\leq 3$ (the constant in Assouad's snowflaking embedding theorem plays a role here), and in the case where $\Gamma$ embeds bi-Lipschiztly into $L_p$ for some $p\in(1,2]$ (in particular $\bZ^d$), this can be improved to $c\leq 2$.

\begin{rem}
In \cite{JolissaintValette14} it was shown that $c_2(\bZ_2\wr\bZ_n^d)=\Omega\Big(
n/\sqrt{\log(n)}\Big)$ for $d=2$, and $c_2(\bZ_2\wr\bZ_n^d)=\Omega(n^{\frac{d}{2}})$ for $d\geq 3$. For $d\geq 2$, by \cite[Theorem 3.1]{naor-peres:11}, $\bZ_2\wr\bZ^d$ coarsely embeds into $L_2$ with compression $\frac 12$, and the compression exponent is attained in this case (cf \cite{naor-peres:11}*{Remark 3.2}). It thus follows that $c_2(\bZ_2\wr\bZ_n^d)=O(n^{\frac{d}{2}})$. Therefore,
\begin{equation}
\Omega\Big(\frac{n}{\sqrt{\log{n}}}\Big)=c_2(\bZ_2\wr\bZ_n^2)=O(n),
\end{equation}
and for $d\geq 3$
\begin{equation}
c_2(\bZ_2\wr\bZ_n^d)=\Theta(n^{\frac{d}{2}}).
\end{equation}
It would be interesting to understand the right order of magnitude for $c_2(\bZ_2\wr\bZ_n^2)$.
It is worth mentioning that the asymptotics for $c_p(\bZ_2\wr\bZ_n)$ are known. Indeed, $c_p(\bZ_2\wr\bZ_n)=\Theta((\log n)^{\min\{1/2,1/p\}})$ for $p\in(1,\infty)$ (see \cite{LPP96} or \cite[Proposition 3.10]{LNP09} for the lower bounds, and for the upper bounds \cite{ANV10} (for $p=2$) and \cite{Tessera12} (for $p\geq 2$)).
\end{rem}

The existence of maps as in \eqref{eq:diam} can also be used to improve the polynomial compression function from \eqref{eq:compression} to a super-polynomial compression function. Define $\Phi\colon \bZ_2\bwr \Gamma\to (\oplus_{k=1}^\infty L_1)_1$ by $\Phi(u)=\sum_{k=1}^\infty f_{2^{2^{k}}}(u)/k^2$. Then it is easy to check that $\Phi$ is Lipschitz and $\|\Phi(u)-\Phi(v)\|_1\gtrsim \rho(d_{\bZ_2\bwr \Gamma}(u,v))$ where $\rho(t)=\frac{t}{\log^c(t)(\log\log t)^2}$. This auto-improvement phenomenon of the embedding in \eqref{eq:compression} that is due to the polynomial behavior in $\e$ of the constant $C(\Gamma,\e)$ is quite remarkable and surprising. We will give an alternate way, which relies on the distortion estimates for lamplighter graphs over finite $d$-dimensional grids from Corollary \ref{cor:general}, to obtain the following super-polynomial compression function for $\bZ_2\bwr\bZ^d$.
 
\begin{thm}
  \label{thm:coarse-embedding}
  For all $d\geq 2$ there exist $C:=C(d)>0$ and
  $h\colon\bZ_2\bwr\bZ^d\to L_1$ such that for all $u,v\in\bZ_2\bwr\bZ^d$, 
  \[
  \rho\big(\tau(u,v)\big)\leq \norm{h(u)-h(v)}_{L_1}
  \leq C\tau(u,v),
  \]
  where $\rho(t)=\frac{t}{\log(t)(\log\log t)^2}.$
\end{thm}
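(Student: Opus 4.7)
The plan is to carry out the self-improvement argument of the preceding remark with the Naor--Peres maps $f_{2^{2^k}}$ replaced by analogues $f_n$ built from scale-$n$ stochastic tree-metric embeddings of $\bZ^d$ and the lifting machinery underlying Theorem \ref{thm:main}. Concretely, the goal is to construct, for each integer $n\ge 2$, an $O_d(1)$-Lipschitz map $f_n\colon\bZ_2\bwr\bZ^d\to L_1$ satisfying
\begin{equation*}
\|f_n(u)-f_n(v)\|_{L_1}\ge \frac{c_d\,\tau(u,v)}{\log n}\qquad\text{whenever }\tau(u,v)\le n,
\end{equation*}
for a positive constant $c_d$ depending only on $d$. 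Given such $f_n$, define $h:=\sum_{k\ge 1}f_{n_k}/k^2$ with $n_k:=2^{2^k}$, viewed inside the $\ell_1$-direct sum of countably many copies of $L_1$ (which is itself $L_1$). Global Lipschitzness of $h$ follows from $\sum k^{-2}<\infty$. For the lower bound, pick $k_0:=\lceil\log_2\log_2 t\rceil$ with $t:=\tau(u,v)$, so that $n_{k_0}\ge t$, $\log n_{k_0}\asymp \log t$, $k_0\asymp\log\log t$, and retain only the $k_0$-th summand of $h$ to obtain
\[
\|h(u)-h(v)\|_{L_1}\ge \frac{\|f_{n_{k_0}}(u)-f_{n_{k_0}}(v)\|_{L_1}}{k_0^2}\gtrsim \frac{t}{k_0^2\log n_{k_0}}\asymp \frac{t}{\log t\,(\log\log t)^2}=\rho(t).
\]

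The construction of $f_n$ proceeds via a hierarchical random partition of $\bZ^d$ truncated at scale $n$: for each dyadic level $j\in\{0,1,\ldots,\lceil\log_2 n\rceil\}$, independently and uniformly shift the standard dyadic partition of $\bZ^d$ into cubes of side $2^j$. These nested random partitions produce a translation-invariant random tree metric $d_T^{(n)}$ on $\bZ^d$ which is non-contracting and has expected distortion $\mathbb{E}\,d_T^{(n)}(x,y)\le K_d\log n\cdot d_{\bZ^d}(x,y)$ on pairs with $d_{\bZ^d}(x,y)\le n$, by the standard Fakcharoenphol--Rao--Talwar analysis. Applying Lemma \ref{lem:lifting} lifts this to a stochastic tree-metric embedding of $\Ts(\bZ^d)$, and combining with the tree-lamplighter embedding of Proposition \ref{prop:tree-embedding} and Lemma \ref{lem:ts-to-lamplighter}---exactly as in the proof of Corollary \ref{cor:main}---produces an $L_1$-valued map on $\bZ_2\bwr\bZ^d$ obeying the desired two-sided estimate for all pairs with $\tau(u,v)\le n$. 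Rescaling by the inverse of the global Lipschitz constant yields $f_n$.

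The principal technical obstacle is ensuring that the lifted map is Lipschitz with a constant \emph{independent of $n$}, not merely for pairs at scale at most $n$ but for all pairs in $\bZ_2\bwr\bZ^d$---this uniform Lipschitz bound is essential in order for the combined series $h$ to be globally Lipschitz. The resolution is to augment the truncated hierarchical tree by a coarse outermost layer assigning to pairs at $d_{\bZ^d}$-distance exceeding $n$ a tree distance proportional to $d_{\bZ^d}$ itself, so that the augmented $d_T^{(n)}$ satisfies $d_T^{(n)}\le K_d\log n\cdot d_{\bZ^d}$ uniformly on $\bZ^d\times\bZ^d$ while remaining non-contracting. The Lipschitz constant of the lifted lamplighter embedding is then bounded by the tree-embedding distortion $O_d(\log n)$, and rescaling produces $f_n$ with the desired properties. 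The main verification is therefore that this augmentation preserves both the non-contracting property and the scale-$n$ distortion bound while keeping the lifted lamplighter embedding globally Lipschitz; the rest of the argument then assembles as outlined above.
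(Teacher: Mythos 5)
Your overall architecture (reduce to scale-restricted maps $f_n$ with lower bound $\gtrsim \tau/\log n$ on pairs with $\tau\le n$, then telescope over $n_k=2^{2^k}$ with weights $k^{-2}$) is exactly the paper's, and that part of the bookkeeping is fine. The gap is in the step you yourself single out as the principal obstacle: the claimed ``augmented'' random tree metric $d_T^{(n)}$ on all of $\bZ^d$ that is simultaneously non-contracting and satisfies $d_T^{(n)}\le K_d\log n\cdot d_{\bZ^d}$ on \emph{all} of $\bZ^d\times\bZ^d$ cannot exist, even with the upper bound only in expectation. Indeed, restricting such an ensemble to an $m\times m$ subgrid would give, for every $m$, a stochastic embedding into dominating tree-metrics with distortion $K_d\log n$ independent of $m$, contradicting the $\Omega(\log m)$ lower bound of Konjevod, Ravi and Salman \cite{KRS01} recalled at the end of Section \ref{sec:app} (for a deterministic augmented tree the contradiction is even stronger). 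The proposed mechanism is also not realizable combinatorially: an ``outermost layer'' of a tree cannot assign to far-apart pairs distances proportional to $d_{\bZ^d}$, because at large scales $d_{\bZ^d}$ ($d\ge2$) is itself far from any tree metric. So the construction of the globally Lipschitz $f_n$, which is the whole difficulty, is not established.

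Two ways to close the gap. The paper never approximates all of $\bZ^d$ by trees: it folds $\bZ^d$ onto the finite grid $\llbracket n\rrbracket^d$ by coordinatewise reflection maps $\psi^v_n$, which are $1$-Lipschitz and isometric on each cell $C^v_s$, pushes travelling-salesman configurations forward, applies the finite-grid bound of Corollary \ref{cor:general}, and averages over the random translate $v$; any pair with $\tau\le n$ lands isometrically in a single cell with probability at least $(3/4)^d$, and global $1$-Lipschitzness is automatic from the folding, so no large-scale tree approximation is ever needed. Alternatively, your route can be repaired by weakening the requirements instead of strengthening the tree: in Lemma \ref{lem:lifting} the lower bound for a pair with $\tau\le n$ only uses non-contraction on pairs of points of $(A\symdif B)\cup\{x,y\}$, which are at $\bZ^d$-distance at most $n$; hence a random hierarchical partition truncated at scale $\asymp n$, with all top-level cubes joined to a single root by edges of length $\asymp n$, suffices (non-contracting at scales $\le n$, expected domination $O_d(\log n)\,d_{\bZ^d}$ at all scales). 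But then the lifting argument must be rerun under these asymmetric hypotheses; as written, your proof leans on the uniform two-sided bound, which is false.
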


It is possible in some situations to exploit distortion estimates to obtain compression estimates (\cf for instance \cite{Austin2011}). Using a barycentric gluing argument, we could get some partial, but far from satisfactory, information. The principle in \cite{ANT2013} which relates compression estimates to distortion estimates for the Heisenberg group, uses crucially the fact that the Heisenberg group has polynomial growth, and does not seem to apply to groups with exponential growth. We construct a ``folding map" that allows us to extend maps on finite $d$-dimensional grids to the whole $d$-dimensional lattice in a way that pushes the distortion bounds for $\La(\bZ_n^d)$ to a compression bound for $\La(\bZ_n^d)$. We first lay the ground for the proof
by first introducing a fair amount of notation. We now fix $d\geq 2$ in the sequel.
Define $\llbracket a,b\rrbracket=\{k\in \bN\colon a\leq k\leq b\}$. We will simply write $\llbracket n\rrbracket$ for $\llbracket 0, n-1\rrbracket=\{0,1,\dots,n-1\}$ and consider the finite $d$-dimensional grid $\llbracket n \rrbracket^d$
which is clearly an isometric subset of $\bZ^d$. For every $s\in\bZ^d$ we let
\[
C_s=n\cdot s+\llbracket n \rrbracket^d
\]
which is a subset of $\bZ^d$ that is naturally isometric
to $\llbracket n\rrbracket^d$. Note that $\llbracket n \rrbracket^d=C_s$ for $s=(0,0,\cdots,0)$, and that $\{C_s\colon s\in \bZ^d\}$ is a partition of $\bZ^d$ into cubic cells isometric to $\llbracket n \rrbracket^d$.

We next set $C^v_s=v+C_s$ for $v\in \llbracket n \rrbracket^d$ and $s\in\bZ^d$. Note that
each of the sets $C^v_s$ is isometric to $\llbracket n\rrbracket^d$, and that for all $v\in \llbracket n\rrbracket^d$, $\{C_s^v\colon s\in \bZ^d\}$ is also a partition of $\bZ^d$, which is the translate by the vector $v$ of the partition $\{C_s\colon s\in \bZ^d\}$.
Note that for any
$v\in \llbracket n\rrbracket^d$, $s\in\bZ^d$ and $A\cup\{x,y\}\subset C_s^v$, the value of
$\tsp(x,A,y)$ is the same whether it is calculated in $\bZ^d$,
or in $C^v_s$. This observation will be used
frequently and implicitly in the sequel.

To prove Theorem \ref{thm:coarse-embedding} we consider the following ``folding map" $\varf_n\colon\bZ\to\llbracket n\rrbracket$
defined by
\[
\varf_n(in+j)=%
\begin{cases}
  j & \text{if $i$ is even}\\
  n-j-1& \text{if $i$ is odd}
\end{cases}
\]
for all $i\in\bz$ and $j\in\llbracket n\rrbracket$. This is $1$-Lipschitz and is
isometric on $in+\llbracket n\rrbracket$ for all $i\in\bz$. It follows that the product map
$\psi_n=\varf_n\otimes\varf_n\otimes\cdots\otimes\varf_n\colon\bZ^d\to\llbracket n\rrbracket^d$ is
$1$-Lipschitz, and for any $s=(s_1,s_2,\dots,s_d)\in\bZ^d$ it is isometric on
$(ns_1+\llbracket n\rrbracket)\times\cdots\times(ns_d+\llbracket n\rrbracket)=n\cdot s+\llbracket n\rrbracket^d=C_s$

For each $v\in \llbracket n\rrbracket^d$ define $\psi^v_n\colon\bZ^d\to\llbracket n\rrbracket^d$ by
$\psi^v_n(x)=\psi_n(x-v)$. Then $\psi^v_n$ is $1$-Lipschitz and isometric on
$C^v_s$ for all $s\in\bZ^d$. A Lipschitz map between metric
spaces induces in a natural way a map between the corresponding
lamplighter spaces (the special case of graphs is discussed
in~\cite{bmsz:19}*{Section~5}). However, this process does not
sufficiently reflect the property of $\psi^v_n$ being isometric on the
sets $C^v_s$. For this reason we proceed differently. Let us write
$A+B$ for $A\symdif B$. Then for sets $A_1,\dots,A_k$ we have 
$\sum_{i=1}^kA_i=A_1\symdif A_2\symdif\dots\symdif A_k$. For 
$v\in \llbracket n\rrbracket^d$, $s\in \bZ^d$   and for a finite subset $F\subset\bZ^d$, let $F^v_s=F\cap C^v_s$,  and define
\[
\sigma^v_n(F)=\sum_{s\in\bZ^d} \psi^v_n(F^v_s)\subset\llbracket n\rrbracket^d.
\]
Note that since $F$ is finite, the sum above has only finitely many non-empty terms.

\begin{lem}
  \label{lem:folding-induced}
  For each $v\in \llbracket n\rrbracket^d$, define
\begin{eqnarray*}
  \psib^v_n\colon\Ts(\bZ^d)&\to& \Ts(\llbracket n\rrbracket^d)\\
  (A,x) &\mapsto& (\sigma^v_n(A),\psi^v_n(x))\ .
\end{eqnarray*}
The map $\psib^v_n$ is
  $1$-Lipschitz. Moreover, given $(A,x),(B,y)\in\Ts(\bZ^d)$, if
  $(A\symdif B)\cup\{x,y\}\subset C^v_s$ for some $s\in\bZ^d$,
  then
  \[
  \tau(\psib^v_n(A,x),\psib^v_n(B,y))=\tau((A,x),(B,y))\ .
  \]
\end{lem}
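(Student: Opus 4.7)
The plan is to prove both assertions by pushing a minimizing TSP path forward through $\psi^v_n$, with the key technical step being a clean algebraic formula for $\sigma^v_n(A)\symdif\sigma^v_n(B)$ that exposes what happens on each cell.

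\textbf{Step 1 (Structural identity).} First I would establish that for all finite $A,B\subset\bZ^d$,
\[
\sigma^v_n(A)\symdif\sigma^v_n(B)=\sum_{s'\in\bZ^d}\psi^v_n\big((A\symdif B)^v_{s'}\big).
\]
Because $(2^{\llbracket n\rrbracket^d},\symdif)$ is an abelian group and only finitely many summands are nonempty, the operation $\symdif$ distributes term-by-term over the cell partition $\{C^v_{s'}\}_{s'\in\bZ^d}$. On each cell, $\psi^v_n|_{C^v_{s'}}$ is an isometry (in particular injective), so $\psi^v_n(A^v_{s'})\symdif\psi^v_n(B^v_{s'})=\psi^v_n(A^v_{s'}\symdif B^v_{s'})$, and $A^v_{s'}\symdif B^v_{s'}=(A\symdif B)^v_{s'}$ follows from the distributivity of intersection over $\symdif$. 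An immediate consequence, which I will use twice below, is
\[
\sigma^v_n(A)\symdif\sigma^v_n(B)\subset\bigcup_{s'\in\bZ^d}\psi^v_n\big((A\symdif B)^v_{s'}\big)=\psi^v_n(A\symdif B).
\]

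\textbf{Step 2 ($1$-Lipschitzness).} Fix $(A,x),(B,y)\in\Ts(\bZ^d)$ and let $x=x_0,x_1,\dots,x_N=y$ be a minimizing TSP path in $\bZ^d$ visiting $A\symdif B$, so that $N=\tau((A,x),(B,y))$. Since $\psi^v_n\colon\bZ^d\to\llbracket n\rrbracket^d$ is $1$-Lipschitz, the image sequence $\psi^v_n(x_0),\dots,\psi^v_n(x_N)$ is a walk in $\llbracket n\rrbracket^d$ of length at most $N$ from $\psi^v_n(x)$ to $\psi^v_n(y)$ that visits $\psi^v_n(A\symdif B)\supset\sigma^v_n(A)\symdif\sigma^v_n(B)$ by Step 1. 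Hence $\tau(\psib^v_n(A,x),\psib^v_n(B,y))\leq N$, proving the Lipschitz claim.

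\textbf{Step 3 (Isometry on a single cell).} Suppose $(A\symdif B)\cup\{x,y\}\subset C^v_s$. Then $(A\symdif B)^v_{s'}=\emptyset$ whenever $s'\neq s$, so Step 1 collapses to $\sigma^v_n(A)\symdif\sigma^v_n(B)=\psi^v_n(A\symdif B)$. The TSP value depends only on the pairwise distances inside $\{x,y\}\cup(A\symdif B)$, and $\psi^v_n$ is an isometry on $C^v_s$, so the remark preceding the lemma (identifying $\tsp$ computed inside $C^v_s$ with $\tsp$ computed in $\bZ^d$, and the analogue inside $\llbracket n\rrbracket^d$) yields $\tau((A,x),(B,y))=\tau(\psib^v_n(A,x),\psib^v_n(B,y))$.

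The main subtlety lies in Step 1: two distinct points of $A\symdif B$ belonging to different cells may fold to the same image under $\psi^v_n$ and thereby cancel in the symmetric difference, so in general $|\sigma^v_n(A)\symdif\sigma^v_n(B)|<|A\symdif B|$. This cancellation is precisely what allows $\psib^v_n$ to contract across cells; confining everything to a single cell rules it out, which is exactly what makes Step 3 produce an equality.
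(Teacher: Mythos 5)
Your proof is correct and follows essentially the same route as the paper's: the key identity $\sigma^v_n(A)\symdif\sigma^v_n(B)=\sigma^v_n(A\symdif B)\subset\psi^v_n(A\symdif B)$, the pushforward of a minimizing walk under the $1$-Lipschitz map $\psi^v_n$ for the Lipschitz bound, and the cell-wise isometry for the single-cell equality (the paper merely reduces to $v=0$ by translation first and notes explicitly that consecutive repeated vertices in the image walk must be deleted). No substantive differences or gaps.
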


\begin{proof}
  Fix $(A,x),(B,y)\in\Ts(\bZ^2)$. Given a finite subset $F\subset \bZ^d$, if we let
$F_s=F\cap C_s$ for $s\in\bZ^d$, then $F^v_s=v+\big(F-v\big)_s$ for
  $v\in \llbracket n\rrbracket^d$ and $s\in\bZ^d$. Since translations are
  isometries, and hence preserve $\tau$, without loss of
  generality we can assume $v=0$. When $v=0$ we simplify the notation to $\sigma_n$, and $\psib_n$. Observe that
  \[
  \sigma_n(A)\symdif\sigma_n(B)=\sum_{s\in\bZ^d} \psi_n(A_s) +
  \sum_{s\in\bZ^d} \psi_n(B_s) = \sum_{s\in\bZ^d}
  \psi_n(A_s\symdif B_s)=\sigma_n(A\symdif B)\ ,
  \]
  and hence $\sigma_n(A)\symdif\sigma_n(B)\subset \psi_n(A\symdif B)$.
  It follows that
  \[
  \tau(\psib_n(A,x),\psib_n(B,y))=
  \tsp(\psi_n(x),\sigma_n(A)\symdif\sigma_n(B),\psi_n(y))\leq
  \tsp(\psi_n(x),\psi_n(A\symdif B),\psi_n(y))\ .
  \]
  So in order to show that $\psib_n$ is $1$-Lipschitz, it remains to
  verify that
  \begin{equation}
    \label{eq:folding-on-tsp}
    \tsp(\psi_n(x),\psi_n(C),\psi_n(y))\leq\tsp(x,C,y)
  \end{equation}
  for any $x,y\in\bZ^d$ and any finite $C\subset\bZ^d$.
  Let $\ell=\tsp(x,C,y)$ and $x_0,x_1,\dots,x_\ell$ be a walk in the
  graph $\bZ^d$ from $x=x_0$ to $y=x_\ell$ visiting all vertices
  in $C$. Since $\psi_n$ is $1$-Lipschitz, it follows that for every
  $i=1,\dots,\ell$, the vertices $\psi_n(x_{i-1})$ and $\psi_n(x_i)$ in
  $\llbracket n\rrbracket^d$ are either equal or adjacent. Deleting successive
  repetitions of vertices yields a walk of length at most $\ell$ in
  $\llbracket n\rrbracket^d$ from $\psi_n(x)=\psi_n(x_0)$ to $\psi_n(y)=\psi_n(x_\ell)$
  visiting all vertices of $\psi_n(C)$. This
  shows~\eqref{eq:folding-on-tsp}.

  If we assume that $(A\symdif B)\cup\{x,y\}\subset C_s$ for some $s\in\bZ^d$, then
  \[
  \sigma_n(A)\symdif\sigma_n(B)=
  \psi_n(A)\symdif\psi_n(B)= \psi_n(A\symdif B),
  \]
  and so we need to show that $ \tsp(\psi_n(x),\psi_n(A\symdif B),\psi_n(y))=\tsp(x, A\symdif B,y)$. This
  follows since $\psi_n$ is an isometry between $C_s$ and $\psi_n(C_s)$.
  \end{proof}

We are now in position to prove Theorem  \ref{thm:coarse-embedding}.

\begin{proof}[Proof of Theorem  \ref{thm:coarse-embedding}]
For every $k\geq 1$, let $n_k=2^{2^{k}}$.  By Corollary \ref{cor:general}, there exists a constant $K:=K(d)>0
$ such that for all $k\geq 1$, there exists a map $f_{n_k}\colon \Ts(\llbracket n_k\rrbracket^d)\to L_1$ such that for all $(A,x),(B,y)\in \Ts(\llbracket n_k\rrbracket^d)$
\[
\frac{\tau((A,x),(B,y))}{K\cdot 2^k}\leq\|f_{n_k}(A,x)-f_{n_k}(B,y)\|_1\leq \tau((A,x),(B,y)).
\]

For $k\geq 1$, let $\nu_k$ denote the uniform probability measure on $\llbracket n_k\rrbracket^d$ and define
\begin{eqnarray*}
g_{k}\colon\Ts(\bZ^d)&\to& L_1(\nu_k,L_1)\\
  (A,x) &\mapsto& \big(v\in \llbracket n_k\rrbracket^d\mapsto f_{n_k}\circ\psib^v_{n_k}(A,x)).
\end{eqnarray*}
and 
\begin{eqnarray*}
h\colon\Ts(\bZ^d)&\to& \bigoplus_{k=1}^\infty L_1(\nu_k,L_1)\\
  (A,x) &\mapsto& \sum_{k=1}^\infty \frac{g_{k}(A,x)}{k^2}.
\end{eqnarray*}

Since $\psib_{n_k}^v$ and $f_{n_k}$ are $1$-Lipschitz, so is $ f_{n_k}\circ\psib^v_{n_k}$ for
  every $v\in\llbracket n_k\rrbracket^d$. It follows that $g_{k}$ is also $1$-Lipschitz and hence $h$ is Lipschitz with Lipschitz constant at most $\sum_{k=1}^\infty \frac{1}{k^2}$. It
  remains to show the lower bound.

  Fix $(A,x),(B,y)\in\Ts(\bZ^d)$, and set
  $D=(A\symdif B)\cup\{x,y\}$. Choose $k\geq 1$ so that
  $2^{2^{k-1}}< \tau((A,x),(B,y))\leq 2^{2^k}$. Then $m=\diam(D)\leq\tsp(x, A\symdif B, y)\leq 2^{2^k}$, and
  thus the set
  $W=\{v\in \llbracket n_{k+1}\rrbracket^d:\,\E t\in\bZ^d\ D\subset C^v_t\}$ has size at least
  $(n_{k+1}-m)^d$. 
  It follows that
  \begin{align*}
    \norm{h(A,x)-h(B,y)}_{1} &\geq \frac{\norm{g_{k+1}(A,x)-g_{k+1}(B,y)}_{1}}{(k+1)^2}=\frac{1}{(k+1)^2}\be_{v_k}\bigl( \norm{f_{n_{k+1}}\circ\psib^v_{n_{k+1}}(A,x)-f_{n_{k+1}}\circ\psib^v_{n_{k+1}}(B,y)}_{1}\bigr) \\
					&\geq\frac{1}{K(k+1)^2\cdot 2^{k+1}}\be_{v_k}\bigl( \tau(\psib^v_{n_{k+1}}(A,x),\psib^v_{n_{k+1}}(B,y))\bigr) \\
    					&\geq \frac{1}{K(k+1)^2\cdot 2^{k+1}}\be_{v_k}\bigl(
    \tau(\psib^v_{n_{k+1}}(A,x),\psib^v_{n_{k+1}}(B,y))\bigm\vert v\in W\bigr)
    \cdot\bp(v\in W)\\[1ex]
    &\geq  \frac{1}{K(k+1)^2\cdot 2^{k+1}}\frac{(n_{k+1}-m)^d}{n_{k+1}^d}\tau((A,x),(B,y))\ ,
  \end{align*}
  where in the last inequality we used Lemma \ref{lem:folding-induced}.
  It remains to observe that 
  
 \begin{equation*}
\frac{n_{k+1}-m}{n_{k+1}}\geq \frac{2^{2^{k+1}}-2^{2^k}}{2^{2^{k+1}}}\geq \frac34,
 \end{equation*}
 and thus
 \begin{equation*}
    \norm{h(A,x)-h(B,y)}_{1}\geq  \frac{1}{K}\Bigl(\frac{3}{4}\Bigr)^d\frac{\tau((A,x),(B,y))}{\log 4\tau((A,x),(B,y))(\log\log 32\tau((A,x),(B,y)))^2}.
 \end{equation*}

We showed that there exist a constant $K'>0$ and $h\colon\Ts(\bZ^d)\to L_1$ such that for all $u,v\in\Ts(\bZ^d)$, 
  \[
  \rho\big(\tau(u,v)\big)\leq \norm{h(u)-h(v)}_{1}
\leq K'\tau(u,v),
  \]
  where $\rho(t)=\frac{t}{\log(t)(\log\log t)^2},$ and Theorem \ref{thm:coarse-embedding} follows from a straightforward modification of Lemma \ref{lem:ts-to-lamplighter}.
\end{proof}

The $(\log\log t)^2$ factor can obviously be improved. For instance by choosing the convergent series to be $\sum_{k=1}^\infty \frac{1}{k\log^{1+\e}(k+1)}$ for some $\e>0$, we could get $\rho(t)=\frac{t}{\log(t)\log\log (t) (\log\log\log t)^{1+\e}}$.

\section{A remark on Lipschitz free spaces}

Let $(X,d_X)$ be a metric space. The linear space of all finitely supported  functions $\mu\colon X\to \bR$ such that $\sum_{x\in X}\mu(x)=0$ is usually called the space of molecules over $X$. A molecule $\mu$ can be thought of as a finitely supported measure such that $\mu(X)=0$. The space of molecules can be equipped with a norm as follows:
\begin{equation}
\|\mu\|_{\LF}:=\inf \Big\{\sum_{j=1}^r |a_j|d_{X}(p_j,q_j)\colon \mu=\sum_{j=1}^ra_j(\1_{\{p_j\}}-\1_{\{q_j\}})\Big\}.
\end{equation} 
The Lipschitz free space over $X$, denoted $\LF(X)$, is the completion of the space of molecules with respect to $\|\cdot \|_{\LF}$. We refer to \cite{Weaver18} for a thorough discussion on Lipschitz free spaces (called Arens-Eells spaces there). It is convenient to think of a representation \[\mu=\sum_{j=1}^ra_j(\1_{\{p_j\}}-\1_{\{q_j\}})\] as a transportation plan, and of $\sum_{j=1}^r |a_j|d_{X}(p_j,q_j)$ as the cost of this transportation plan. It is not difficult to see that the infimum above is attained, and thus we can assume that there is a transportation cost for $\mu$ which realizes $\|\mu\|_{\LF}$. We will refer to such a plan as a \emph{minimizing plan} for $\mu$.
It is an intriguing problem to understand the relationship between the geometry of free spaces and the geometry of the Banach space $L_1$. 

In this appendix we are concerned with such connections for free spaces over finite metric spaces, and from now on we will only consider finite spaces unless explicitly mentioned otherwise. If $X$ is a finite metric space on $n+1$ points then $\LF(X)$ is a $n$-dimensional Banach space, and a natural way to quantify its proximity to $L_1$ is by studying its Banach-Mazur distortion to subspaces of $L_1$. For a Banach space $X$, we will denote by $d_Y(X)$ the $Y$-Banach-Mazur distortion of $X$, i.e. the quantity
\begin{equation}
d_Y(X)=\inf\{\|T\|\cdot\|T^{-1}\|\mid T\colon X\to Y \textrm{ into isomorphism}\}.
\end{equation}
If $Y=L_p$ we simply write $d_p(X)$ and if $Y=\ell_p^k$ we will write $d_{p,k}(X)$. If $F$ has dimension $k$ it is plain that $d_1(F)\le d_{1,k}(F)$. It is well known that if $d$ is a metric supported on a finite tree, then $\LF(T,d)$ is isometrically isomorphic to $\ell_1^{|T|-1}$ (see \cite[Proposition 2.1]{DKO20} for a proof) and thus $d_{1,|T|-1}(\LF(T,d))=1$. The Banach Mazur distortion between $\LF(X)$ and $\ell_1^{|X|-1}$, the $\ell_1$-space of the corresponding dimension was investigated, by Dilworth, Kutzarova and Ostrovskii in \cite{DKO20}. It was observed that for an expander graph $G$ on $n+1$ points, $d_{1,n}(\LF(G))=\Theta(\log n)$ and it was proved that if $G$ is an unweighted graph on $n+1$ vertices then $d_{1,n}(\LF(G))=O(n^{\frac{8}{11}})$ \cite[Proposition 3.9]{DKO20}. Also, for $D_k$ the $k$-diamond graph it was shown \cite[Proposition 3.9]{DKO20} that $d_{1,|D_k|-1}(\LF(D_k))=\Theta(\log |D_k|)$. A similar estimate was proven for Laakso graphs and multi-branching diamond, and a similar lower bound \cite[Theorem 4.2]{DKO20} was provided for a class of recursively defined graphs that generalizes the diamond graph construction. Their results improve over the best known asymptotics for $d_{1,k}(F)$ where $F$ is an arbitrary $k$-dimensional Banach space (see \cite{Tikhomirov19} (for a $\Omega(k^{\frac{5}{9}}\log^{-c} k)$ lower bound for some universal constant $c>0$) and \cite{Youssef14} (for a $(2k)^{\frac{5}{6}}$ upper bound) and the references to prior work therein). 

In this appendix we do not restrict ourself to the study of Banach-Mazur distortion of $\LF(X)$ into the $\ell_1$-space of the corresponding dimension. Recall that if $F$ is an arbitrary $n$-dimensional Banach space we have $d_1(F)\leq \sqrt{n}$. Indeed, by John's theorem $d_2(F)\leq \sqrt{n}$ and $\ell_2^n$ embeds isometrically into $L_1$. This upper bound is essentially tight since any embedding of $\ell_\infty^n$ into $L_1$ incurs Banach-Mazur distortion at least $\Omega(\sqrt{n})$. Talagrand \cite{Talagrand90} showed that a finite-dimensional subspace $F$ of $L_1$ can always be isomorphically embedded with Banach-Mazur distortion $1+\varepsilon$ into $\ell_1^k$ with $k=O(K(F)^2\varepsilon^{-2}n)$, where $K(F)$ is the K-convexity constant of $F$. Since Pisier \cite{Pisier80} proved that $K(F)=O(\sqrt{\log n})$ when $F$ is an $n$-dimensional subspace of $L_1$, it follows any $n$-dimensional Banach space $F$ isomorphically embeds into $\ell_1^{O(\varepsilon^{-2}n\log n)}$ with Banach-Mazur distortion at most $(1+\varepsilon)\sqrt{n}$. It turns out that $d_1(F)$ is significantly smaller than the general upper bound above when $F$ is the free space over a metric space $X$ such that $|X|=n+1$. 

\begin{thm}\label{theo:general}
Let $(X,d_X)$ be a finite metric space such that $|X|=n$. Then $\LF(X)$ isomorphically embeds into $\ell_1^{O(n\log n)}$ with Banach-Mazur distortion at most $O(\log n)$.
\end{thm}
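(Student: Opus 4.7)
The plan is to combine three ingredients that have all appeared earlier in the excerpt: the Fakcharoenphol--Rao--Talwar stochastic $O(\log n)$-embedding of an arbitrary $n$-point metric space into dominating tree-metrics (used in the proof of Corollary~\ref{cor:general}), the isometric identification $\LF(T,d_w)\cong\ell_1^{|T|-1}$ for weighted finite trees, and the Talagrand--Pisier dimension reduction for finite-dimensional subspaces of $L_1$ recalled just before the statement of Theorem~\ref{theo:general}.

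First, I would apply Fakcharoenphol--Rao--Talwar to obtain probabilities $p_1,\dots,p_k\geq 0$ summing to $1$, finite weighted trees $(T_i,d_{T_i})$, and non-contracting injections $f_i\colon X\to T_i$ satisfying
\[
\sum_{i=1}^{k}p_i\,d_{T_i}(f_i(x),f_i(y))\leq C\log(n)\,d_X(x,y)
\]
for an absolute constant $C$. Each $f_i$ has a canonical linearization $\hat f_i\colon\LF(X)\to\LF(T_i)$ defined on molecules by $\hat f_i(\1_{\{x\}}-\1_{\{y\}})=\1_{\{f_i(x)\}}-\1_{\{f_i(y)\}}$. I would assemble these into
\[
\Phi\colon\LF(X)\longrightarrow\Bigl(\bigoplus_{i=1}^{k}\LF(T_i)\Bigr)_{\!1},\qquad \Phi(\mu)=\bigl(p_i\hat f_i(\mu)\bigr)_{i=1}^{k}.
\]
For the upper bound, testing on a minimizing plan $\mu=\sum_{j}a_j(\1_{\{p_j\}}-\1_{\{q_j\}})$, using $\|\hat f_i(\mu)\|_{\LF(T_i)}\leq\sum_j|a_j|\,d_{T_i}(f_i(p_j),f_i(q_j))$, and averaging with weights $p_i$ yields $\|\Phi(\mu)\|\leq C\log(n)\,\|\mu\|_{\LF(X)}$. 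For the lower bound, since each $f_i$ is non-contracting and injective, the pullback metric on $X$ dominates $d_X$; combined with the classical isometric inclusion $\LF(Y)\hookrightarrow\LF(Z)$ for $Y\subset Z$ (which follows from McShane extension via the $\LF=\Lip_{0}^{*}$ duality), this produces $\|\hat f_i(\mu)\|_{\LF(T_i)}\geq\|\mu\|_{\LF(X)}$ for every $i$, hence $\|\Phi(\mu)\|\geq\|\mu\|_{\LF(X)}$. Thus $\Phi$ is an isomorphic embedding of Banach--Mazur distortion at most $C\log n$.

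Next, using $\LF(T_i)\cong\ell_1^{|T_i|-1}$ isometrically, the target of $\Phi$ is isometric to $\ell_1^{N}$ with $N=\sum_i(|T_i|-1)$; in particular $\Phi(\LF(X))$ is an $(n-1)$-dimensional subspace of $L_1$. To bring the ambient dimension down, I would then apply Talagrand's theorem together with Pisier's bound $K(F)=O(\sqrt{\log n})$ for subspaces of $L_1$, exactly as summarised in the paragraph preceding Theorem~\ref{theo:general}, to isomorphically embed $\Phi(\LF(X))$ into $\ell_1^{O(n\log n)}$ with distortion at most $2$. Composing with $\Phi$ gives the desired embedding $\LF(X)\hookrightarrow\ell_1^{O(n\log n)}$ of Banach--Mazur distortion $O(\log n)$.

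The one subtle step is the lower bound for $\Phi$: although each $f_i$ is non-contracting as a map of metric spaces, the linearization $\hat f_i$ is not a priori non-contracting on $\LF(X)$, because optimal transportation plans in $\LF(T_i)$ may use via-points outside $f_i(X)$. This is precisely what the isometric inclusion $\LF(f_i(X))\hookrightarrow\LF(T_i)$ resolves, and it is the only place where an intrinsic property of free spaces beyond trivial linearity is invoked. Once it is in hand, the stochastic upper bound of FRT, the isometric tree identification $\LF(T)=\ell_1^{|T|-1}$, and the Talagrand--Pisier dimension reduction all slot together cleanly.
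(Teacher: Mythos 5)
Your proposal is correct and follows essentially the same route as the paper: lift the Fakcharoenphol--Rao--Talwar stochastic tree embedding to the free space via the linearizations $\hat f_i$ (the paper packages this as a lifting lemma plus a concatenation lemma, handling your ``via-points'' subtlety by noting that minimizing transportation plans for $T_i(\mu)$ can be taken supported on $f_i(X)$, which is the same fact as your isometric inclusion $\LF(f_i(X))\hookrightarrow\LF(T_i)$), use $\LF(T)\cong\ell_1^{|T|-1}$, and finish with the Talagrand--Pisier dimension reduction to reach $\ell_1^{O(n\log n)}$ with distortion $O(\log n)$.
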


Theorem \ref{theo:general} supports the evidence that a free space cannot be too far from an $L_1$-space. It also follows from Theorem \ref{theo:general} that if we are not restricted to consider $\ell_1$-spaces of the corresponding dimension, and if we are willing to pay the price of a logarithmic loss in the dimension, then the Banach-Mazur distortion for the free space over an unweighted graph on $n$ vertices given in \cite{DKO20} can be exponentially improved. The tradeoff between dimension and distortion is a very interesting problem that was already raised in \cite[Section 5, Problem 2]{NS07}.

A proof of Theorem \ref{theo:general}, which relies on the concept of stochastic embeddings into dominating tree-metrics, could be given by invoking two principles which can be extracted from \cite{NS07} and \cite{Charikar02}. Recall that $\mathsf{W}_1(X)$, the Wasserstein-1 space over a finite metric space $X$, is the set of probability measures over $X$ equipped with the $\ell_1$-transportation cost metric. In \cite[Lemma 3.1]{NS07}, Naor and Schechtman essentially proved that the isomorphic embeddability of the Banach space $\LF(X)$ into $L_1$ follows from the bi-Lipschitz embeddability of the metric space $\mathsf{W}_1(X)$. The proof that is given in \cite{NS07} deals with the special case of finite $2$-dimensional grids but the argument extends straightforwardly to any finite metric space (and to other $L_p$-spaces). The second principle asserts that if a metric space $X$ can be stochastically $D$-embedded into dominating tree-metrics then the transportation cost metric over $X$ can be embedded into $\ell_1$ with distortion $O(D)$. It was explicitly mentioned in \cite{IndykThaper03} that this principle was contained implicitly in a result of Charikar from \cite{Charikar02} whose proof was based on the techniques of Kleinberg and Tardos \cite{KleinbergTardos99,KleinbergTardos02}. 

We will prove an elementary general lemma, which makes explicit a direct link between stochastic embeddings into dominating metrics and Lipschitz free spaces, and that allows us to quote the literature on stochastic embeddings into dominating tree-metrics, namely the upper bound proved by Fakcharoenphol, Rao and Talwar in \cite{FRT04}, to obtain Theorem \ref{theo:general}. The proof given below does not need the auxiliary use of a transportation cost metric since we work directly with the Lipschitz free norm. In particular, the proof bypasses the reduction step in \cite{NS07} and thus does not require Banach spaces theory results needed in \cite{NS07} such as Ribe theorem (or differentiation techniques), nor the principle of local reflexivity. The body of work in stochastic embeddings, which is used as a black box, is obviously the heart of the matter, but its connection to Lipschitz free spaces will hopefully now appear to be elementary and somewhat more transparent. It is worth noting several other interesting consequences listed in the theorem below.

\begin{thm}\label{theo:consequences}
Let $(X,d_X)$ be a metric space of one of the following forms:
\begin{enumerate}
\item $d_X$ is a metric supported on a finite $k$-outerplanar graph $G$, for some $k\geq 1$,
\item $d_X$ is the graph metric on a finite graph $G$ with finite Euler characteristic,
\item $d_X$ is a metric supported on a finite graph with pathwidth $k$, for some $k\ge 2$,
\item $d_X$ is a metric supported on a finite graph which does not have $T$ as a minor, where $T$ can be any tree.
\end{enumerate}
Then $\LF(X)$ isomorphically embeds into $\ell_1^{O(|X|\log |X|)}$ with Banach Mazur distortion $O_X(1)$, where the $O_X(1)$ bound might depend on some structural properties of $X$ but not on the dimension of $\LF(X)$.
\end{thm}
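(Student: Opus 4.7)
The plan is to follow the strategy outlined for Theorem \ref{theo:general}: translate each of the four classes' stochastic embedding results into an isomorphic embedding of $\LF(X)$ into $\ell_1^{N}$ with the advertised Banach--Mazur distortion, and then apply Talagrand's dimension reduction to reach $\ell_1^{O(|X|\log|X|)}$. The pivotal ingredient, which I would make explicit first, is the elementary lemma implicit in the proof of Theorem \ref{theo:general}: if $(X,d_X)$ admits a stochastic $D$-embedding into dominating tree-metrics with weights $(p_i)$ and non-contracting maps $f_i\colon X\to (T_i,d_i)$, then the linear map $T\colon \LF(X)\to \bigl(\bigoplus_i \LF(T_i)\bigr)_1$ defined by $T(\mu)=(p_i (f_i)_*\mu)_i$ is an isomorphic embedding with $\|T\|\cdot\|T^{-1}\|\leq D$. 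The upper bound is obtained by applying a minimizing plan of $\mu$ inside each $\LF(T_i)$ and then averaging with weights $p_i$, which invokes the domination inequality $\sum_i p_i d_i(f_i(x),f_i(y))\leq D\, d_X(x,y)$. The lower bound uses that each $f_i$ is automatically injective on a metric space (being non-contracting), so a minimizing plan for $(f_i)_*\mu$ pulls back via $f_i^{-1}$ to a plan for $\mu$ whose cost is at most the one in $T_i$, giving $\|\mu\|_{\LF(X)}\leq \|(f_i)_*\mu\|_{\LF(T_i)}$ for each $i$, and hence after averaging. Since $\LF$ of any finite weighted tree is isometric to $\ell_1$ of the corresponding dimension, the target $\bigl(\bigoplus_i\LF(T_i)\bigr)_1$ is isometric to $\ell_1^N$ with $N=\sum_i(|T_i|-1)$.

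Next, I would insert the relevant stochastic embedding result for each of the four classes listed in Theorem \ref{theo:consequences}: the Gupta--Newman--Rabinovich--Sinclair / Chekuri--Gupta--Newman--Rabinovich--Sinclair bound of $O_k(1)$ for $k$-outerplanar metrics; the Gupta--Newman--Rabinovich--Sinclair bound of $O(\log\chi(G))$ for finite Euler characteristic; the Lee--Sidiropoulos bound of $O_k(1)$ for pathwidth-$k$ metrics; and for graphs excluding a fixed tree $T$ as minor, the Robertson--Seymour theorem that reduces the case to bounded pathwidth. In each case the distortion is a constant $O_X(1)$ in the sense of the theorem statement, so the lemma of the previous paragraph yields an isomorphic embedding $\LF(X)\hookrightarrow \ell_1^{N}$ with Banach--Mazur distortion $O_X(1)$, for some $N$ which may be much larger than $|X|$.

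Finally, the dimension reduction is handled by Talagrand's theorem together with Pisier's bound on the $K$-convexity constant: any $n$-dimensional subspace $F$ of $L_1$ embeds into $\ell_1^{O(K(F)^2\e^{-2}n)}$ with Banach--Mazur distortion $1+\e$, and $K(F)=O(\sqrt{\log n})$ for $F$ an $n$-dimensional subspace of $L_1$. Applying this with $\dim\LF(X)=|X|-1$ and a fixed small $\e$ produces a further isomorphic embedding with constant-factor distortion into $\ell_1^{O(|X|\log|X|)}$, which composed with the embedding from the previous step completes the proof with overall Banach--Mazur distortion $O_X(1)$. I expect the only genuinely delicate point to be the verification of the lower bound in the key lemma; once that is established, the remainder is a transparent assembly of existing stochastic embedding results with Talagrand's dimension reduction used as a black box.
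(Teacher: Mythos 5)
Your proposal is correct and follows essentially the same route as the paper: you merge the paper's Lemma~\ref{lemm:lifting} and Lemma~\ref{lemm:stochastic-to-deterministic} into one explicit map $\mu\mapsto(p_i(f_i)_*\mu)_i$ into $\bigl(\bigoplus_i\LF(T_i)\bigr)_1\cong\ell_1^N$ (using that free spaces over finite weighted trees are isometrically $\ell_1$), plug in the same stochastic tree-embedding results from Section~4 for the four graph classes (GNRS/CGNRS, GNRS for Euler characteristic, Lee--Sidiropoulos, and Robertson--Seymour to reduce tree-minor-free to bounded pathwidth), and finish with the same Talagrand--Pisier dimension reduction to $\ell_1^{O(|X|\log|X|)}$. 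The cosmetic packaging of the lifting lemma as a single embedding rather than a stochastic embedding into dominating $\{\LF(T)\}$-norms does not change the argument, and your treatment of the lower bound (pulling back a minimizing plan through the injective $f_i$) matches the paper's.
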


All the statements in Theorem \ref{theo:consequences} follow from the discussion in Section 4. We now proceed with the proof of a lemma from which Theorem \ref{theo:general} will be easily deduced. It will be convenient to introduce the following definition.  

\begin{defn}
Let $\mathsf{F}$ be a class of Banach spaces. For $D\geq1$, we say that a Banach space $E$ admits a \emph{stochastic $D$-embedding into dominating $\mathsf{F}$-norms} if there exist non-negative numbers $p_1, p_2,\dots,p_k$ such that $\sum_{i=1}^k p_i=1$, and linear maps $T_i\colon E\to (F_i,\|\cdot\|_i)\in\mathsf{F}$ such that for all $x\in X$
\begin{equation}\label{eq:lower-norm}
\|T_i(x)\|_i\geq \|x\|_E \textrm{ for all } 1\leq i\leq k
\end{equation}
and 
\begin{equation}\label{eq:upper-norm}
\sum_{i=1}^k p_i\|T_i(x)\|_i\leq D\|x\|_E.
\end{equation}
\end{defn}

A simple concatenation argument gives the following lemma.  

\begin{lem}\label{lemm:stochastic-to-deterministic}
Let $\mathsf{F}$ be a collection of Banach spaces and $\sup_{F\in\mathsf{F}}d_{1}(F)\le K$. If $(E,\|\cdot\|)$ admits a stochastic $D$-embedding into dominating $\mathsf{F}$-norms then $E$ isomorphically embeds into $L_1$ with Banach-Mazur distortion at most $D\cdot K$.
\end{lem}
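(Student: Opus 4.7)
The plan is to mirror the proof of Lemma \ref{lem:stochastic-to-deterministic}, replacing bi-Lipschitz embeddings of metric spaces by isomorphic embeddings of Banach spaces and scalar (metric) distortions by Banach--Mazur distortions. First I would unpack the stochastic embedding data: weights $p_1,\dots,p_k$ summing to one together with bounded linear operators $T_i\colon E\to(F_i,\|\cdot\|_i)$ satisfying \eqref{eq:lower-norm} and \eqref{eq:upper-norm}. Fix $\varepsilon>0$ and, for each $i$, choose a linear isomorphism $S_i\colon F_i\to L_1(\mu_i)$ onto its image with $\|S_i\|\cdot\|S_i^{-1}\|\le K+\varepsilon$; after rescaling by $\|S_i^{-1}\|$ we may arrange that
\[
\|y\|_i\le \|S_i y\|_1 \le (K+\varepsilon)\|y\|_i \qquad\text{for every }y\in F_i.
\]

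The embedding itself is the direct-sum operator
\[
T\colon E\longrightarrow \Bigl(\bigoplus_{i=1}^k L_1(\mu_i)\Bigr)_1,\qquad T(x)=\bigl(p_1 S_1 T_1(x),\,\dots,\,p_k S_k T_k(x)\bigr),
\]
where the target is identified with an $L_1$-space in the standard way, since an $\ell_1$-sum of $L_1$-spaces is itself isometrically isomorphic to an $L_1$-space. A direct computation then yields the two required estimates: the lower bound $\|T(x)\|\ge \|x\|_E$ follows from the non-contraction of each $S_i$ combined with \eqref{eq:lower-norm} and $\sum_i p_i=1$, while the upper bound $\|T(x)\|\le D(K+\varepsilon)\|x\|_E$ follows from $\|S_i y\|_1\le(K+\varepsilon)\|y\|_i$ together with \eqref{eq:upper-norm}.

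Letting $\varepsilon\to 0$ gives Banach--Mazur distortion at most $DK$, as claimed. There is essentially no obstacle here beyond bookkeeping; the only subtle point is that the infimum defining $d_1(F_i)$ need not be attained in general, which is why one works with $K+\varepsilon$ and passes to the limit. In the intended application to $\LF(X)$ with $X$ finite the spaces $F_i$ are finite-dimensional, so a standard compactness argument would even permit taking $\varepsilon=0$ from the outset.
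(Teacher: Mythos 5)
Your argument is correct and is exactly the ``simple concatenation argument'' the paper has in mind: it is the verbatim linear analogue of the proof of Lemma~\ref{lem:stochastic-to-deterministic}, with the direct-sum operator $x\mapsto(p_1S_1T_1(x),\dots,p_kS_kT_k(x))$ into an $\ell_1$-sum of $L_1$-spaces and the same two one-line estimates from \eqref{eq:lower-norm} and \eqref{eq:upper-norm}. The $\varepsilon$-and-limit bookkeeping (or compactness in the finite-dimensional application) is handled appropriately, so nothing is missing.
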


The next lemma is a stochastic version of a key property of Lipschitz free spaces (see for instance \cite[Lemma 2.2]{GodefroyKalton03}. If $X$ embeds bi-Lipschitzly into $Y$ with distortion at most $D$ then $\LF(X)$ isomorphically embeds in to $\LF(Y)$ with Banach-Mazur distortion at most $D$. with will allow us to reduce the $L_1$-embedding problem for $\LF(X)$ to the the problem of the stochastic embeddability of $X$ into dominating tree-metrics.
 
\begin{lem}\label{lemm:lifting}
If a metric space $(X,d_X)$ admits a stochastic $D$-embedding into dominating $\mathsf{Y}$-metrics, then $\LF(X)$ admits a stochastic $D$-embedding into dominating $\{\LF(Y)\colon Y\in\mathsf{Y}\}$-norms. 
\end{lem}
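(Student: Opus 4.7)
The plan is to define the stochastic embedding on the free space by linearly extending the given maps $f_i\colon X\to Y_i$ (the ones witnessing the stochastic embedding of $X$ into dominating $\mathsf{Y}$-metrics). Since $f_i$ is non-contracting, it is automatically injective, so the formula
\[
T_i\Big(\sum_{j} a_j(\mathbf{1}_{\{p_j\}}-\mathbf{1}_{\{q_j\}})\Big)=\sum_{j} a_j(\mathbf{1}_{\{f_i(p_j)\}}-\mathbf{1}_{\{f_i(q_j)\}})
\]
defines an honest linear map $T_i\colon\LF(X)\to\LF(Y_i)$ on the dense subspace of molecules, which we will show is bounded (by $D$ times the $\LF(X)$ norm in expectation) and in particular extends. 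I would use the same weights $p_1,\dots,p_k$ from the hypothesis.

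For the averaged upper bound $\sum_i p_i\|T_i(\mu)\|_{\LF(Y_i)}\leq D\|\mu\|_{\LF(X)}$, I would fix a minimizing plan $\mu=\sum_j a_j(\mathbf{1}_{\{p_j\}}-\mathbf{1}_{\{q_j\}})$ for $\mu$. The image $\sum_j a_j(\mathbf{1}_{\{f_i(p_j)\}}-\mathbf{1}_{\{f_i(q_j)\}})$ is a (generally non-minimizing) plan for $T_i(\mu)$, so $\|T_i(\mu)\|_{\LF(Y_i)}\leq\sum_j|a_j|d_{Y_i}(f_i(p_j),f_i(q_j))$. Swapping the order of summation and applying the stochastic upper bound \eqref{eq:upper} pointwise in $j$ yields
\[
\sum_i p_i\|T_i(\mu)\|_{\LF(Y_i)}\leq\sum_j|a_j|\sum_i p_i d_{Y_i}(f_i(p_j),f_i(q_j))\leq D\sum_j|a_j|d_X(p_j,q_j)=D\|\mu\|_{\LF(X)}.
\]

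For the pointwise lower bound $\|T_i(\mu)\|_{\LF(Y_i)}\geq\|\mu\|_{\LF(X)}$, I would argue by duality with Lipschitz functions, \emph{i.e.}\ use the identification $\LF(Z)^*=\Lip_0(Z)$. Given any $1$-Lipschitz function $g\colon X\to\mathbb{R}$ (vanishing at a basepoint), I define a function on $f_i(X)$ by $h(f_i(x))=g(x)$; this is well-defined because $f_i$ is injective, and it is $1$-Lipschitz on $f_i(X)$ because the non-contraction bound \eqref{eq:lower} gives $|h(f_i(x))-h(f_i(y))|=|g(x)-g(y)|\leq d_X(x,y)\leq d_{Y_i}(f_i(x),f_i(y))$. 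I then extend $h$ to all of $Y_i$ by McShane's theorem with the same Lipschitz constant. Since $\mu$ is a molecule and $\int h\,dT_i(\mu)=\int(h\circ f_i)\,d\mu=\int g\,d\mu$, taking the supremum over $g$ in the unit ball of $\Lip_0(X)$ gives the desired lower bound.

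There is no real obstacle here; the only thing to be careful about is verifying that the non-contracting hypothesis \eqref{eq:lower} is strong enough to give both the injectivity of $f_i$ (needed so that $T_i$ is well-defined on molecules) and the pointwise lower bound on the free-space norm (via McShane extension). One might alternatively avoid duality altogether and prove the lower bound by pushing an optimal transportation plan for $T_i(\mu)$ backwards through $f_i^{-1}$, but this is messier because the optimal plan for $T_i(\mu)$ need not be supported in $f_i(X)$, so the extension/duality route is cleaner.
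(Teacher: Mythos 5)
Your proposal is correct, and your map $T_i$ is exactly the paper's (your linear extension coincides with the pushforward $T_i(\mu)(y)=\mu(f_i^{-1}(y))$), as is your upper-bound argument: push a minimizing plan for $\mu$ forward, average with the weights $p_i$, and apply \eqref{eq:upper} edge by edge. Where you genuinely diverge is the lower bound. The paper stays entirely inside the free norm: it takes a \emph{minimizing} plan for $T_i(\mu)$, observes it may be assumed supported on $f_i(X)$, pulls it back through $f_i^{-1}$ to get an admissible (not necessarily optimal) plan for $\mu$, and uses \eqref{eq:lower} termwise to get $\|T_i(\mu)\|_i\geq\|\mu\|_X$. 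You instead dualize: test $T_i(\mu)$ against $1$-Lipschitz functions, transport $g\in\Lip(X)$ to $f_i(X)$ via injectivity and \eqref{eq:lower}, extend by McShane, and use $\LF(Z)^*=\Lip_0(Z)$ (equivalently Kantorovich--Rubinstein duality, which for finite $X$ is just LP duality) in $X$ to recover $\|\mu\|_X$. Your comment about the pull-back route is apt: the paper's phrase ``a transportation plan for $T_i(\mu)$ is supported on $f(X)$'' is only true after one invokes the standard fact that an optimal plan can be taken supported on $\supp(T_i(\mu))\subset f_i(X)$ (e.g.\ by shortcutting intermediate points via the triangle inequality), a point the paper glosses over and your duality argument sidesteps entirely. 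The trade-off is that the paper's route is deliberately self-contained and avoids any duality or transportation-distance machinery (this is emphasized in the surrounding discussion), whereas yours imports the nontrivial direction of the duality $\|\mu\|_{\LF}=\sup\{\int g\,d\mu:\ \Lip(g)\leq1\}$ plus McShane extension; both yield the same constant $D$, and for finite spaces both are elementary, so either proof is acceptable.
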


\begin{proof}
Assume that $X$ admits a stochastic $D$-embedding into dominating $\mathsf{Y}$-metrics and let $(p_i)_{i=1}^k$ and $f_i\colon X\to (Y_i,d_i)$ witness this assumption. Define $T_i\colon \LF(X)\to\LF(Y_i)$ by $T_i(\mu)(y)=\mu(f_i^{-1}(y))$ if $y\in f(X)$ and $0$ otherwise. Note that $T_i$ is well-defined since $f_i$ is injective. It is easy to verify that $T_i(\mu)\in\LF(Y_i)$ and that $T_i$ is linear. We will show that for all $\mu$ in $\LF(X)$ and for all $1\leq i\leq k$, $\|T_i(\mu)\|_i\geq \|\mu\|_X$ and $\sum_{i=1}^k p_i\|T_i(\mu)\|_i\leq D\cdot\|\mu\|_X$.

Since $f_i$ is injective and thus a bijection between $X$ and $f(X)\subset Y$, there is a bijective correspondence between the transportation plans for $\mu\in \LF(X)$ and those for $T_i(\mu)$. For instance, if we let $m_{pq}:=\1_{\{p\}}-\1_{\{q\}}$ and if $\sum_{j=1}^ra_km_{p_jq_j}$ is a transportation plan for $\mu$ then for all $y\in Y_i$, 
\begin{align*}
\sum_{j=1}^ra_j \Big(\1_{\{f_i(p_j)\}}(y)-\1_{\{f_i(q_j)\}}(y)\Big)= \sum_{j=1}^ra_j \Big(\1_{\{p_j\}}(f_i^{-1}(y))-\1_{\{q_j\}}(f_i^{-1}(y))\Big)=\mu\Big(f_i^{-1}(y)\Big)=T_i(\mu)(y),
\end{align*}
and thus
$\sum_{j=1}^r a_jm_{f_i(p_j)f_i(q_j)}$ is a transportation plan for $T_i(\mu)$. 

Let $\mu\in\LF(X)$ and let $T_i(\mu)=\sum_{j=1}^r a^{(i)}_j m_{p^{(i)}_j q^{(i)}_j}$ be a minimizing transportation plan for $T_i(\mu)$. Sicne a transportation plan for $T_i(\mu)$ is supported on $f(X)$, there exist $u^{(i)}_j, v^{(i)}_j\in X$ be such that $p^{(i)}_j=f_i(u^{(i)}_j)$ and $q^{(i)}_j=f_i(v^{(i)}_j)$. Therefore,
$$\|T_i(\mu)\|_i=\sum_{j=1}^r |a^{(i)}_j |d_{Y_i}\Big(p^{(i)}_j,q^{(i)}_j\Big)=\sum_{j=1}^r |a^{(i)}_j|d_{Y_i}\Big(f_i(u^{(i)}_j),f_i(v^{(i)}_j)\Big)\geq \sum_{j=1}^r |a^{(i)}_j|d_{X}\Big(u^{(i)}_j,v^{(i)}_j\Big)\geq \|\mu\|_X,$$

where for the last inequality we use the fact that $\sum_{j=1}^r |a^{(i)}_j|d_{X}(u^{(i)}_j,v^{(i)}_j)$ is a transportation plan for $\mu$. Indeed, for all $x\in X$, we have 
\begin{align*}
\sum_{j=1}^r a^{(i)}_j\Big(\1_{\{u^{(i)}_j\}}(x)-\1_{\{v^{(i)}_j\}}(x)\Big)&=\sum_{j=1}^r a^{(i)}_j\Big(\1_{\{f_i^{-1}(p^{(i)}_j)\}}(x)-\1_{\{f_i^{-1}(q^{(i)}_j)\}}(x)\Big)=\sum_{j=1}^r a^{(i)}_j\Big(\1_{\{p^{(i)}_j\}}(f_i(x))-\1_{\{q^{(i)}_j\}}(f_i(x))\Big)\\
		&=T_i(\mu)(f_i(x))=\mu(f_i^{-1}(f_i(x)))=\mu(x).
\end{align*}

Now, let $\sum_{j=1}^r a_j m_{p_jq_j}$ be a minimizing transportation plan for $\mu$. As we already noted above $\sum_{j=1}^r a_j m_{f_i(p_j)f_i(q_j)}$ is a transportation plan for $T_i(\mu)$ whose cost is
\begin{align*}
\sum_{i=1}^k p_i \sum_{j=1}^{r}|a_j|d_{Y_i}\Big(f_i(p_j),f_i(q_j)\Big)&= \sum_{j=1}^{r}|a_j|\sum_{i=1}^k p_i d_{Y_i}\Big(f_i(p_j),f_i(q_j)\Big)\stackrel{\eqref{eq:upper}}{\leq}  D\sum_{j=1}^{r}|a_j|d_X(p_j,q_j)\\
 & =D\|\mu\|_X \textrm{ (by minimality of the transportation plan)},
\end{align*}
and thus by taking minima on the left-hand side we have $\sum_{i=1}^k p_i\|T_i(\mu)\|_i\leq D\|\mu\|_X.$
\end{proof}

 The trees that are constructed in stochastic embeddings into dominating tree-metrics are finite weighted trees. Since we have already noticed that $d_1(\LF(T,d))=1$ for any metric on a weighted finite tree, the corollary below follows from Lemma \ref{lemm:stochastic-to-deterministic} and Lemma \ref{lemm:lifting}.

\begin{cor}\label{coro:main}
If $(X,d_X)$ admits a stochastic $D$-embedding into dominating tree-metrics then $\LF(X)$ embeds into $L_1$ with distortion at most $D$.
\end{cor}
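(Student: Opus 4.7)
The plan is to combine the two preceding lemmas exactly as set up. Given a stochastic $D$-embedding of $(X,d_X)$ into dominating tree-metrics, Lemma~\ref{lemm:lifting}, applied with $\mathsf{Y}$ being the class of all finite weighted tree-metrics that arise from the hypothesis, delivers non-negative weights $p_1,\dots,p_k$ summing to~$1$ and linear operators $T_i\colon \LF(X)\to \LF(T_i,d_{w_i})$ satisfying, for every $\mu\in\LF(X)$,
\[
\|T_i(\mu)\|_{\LF(T_i)}\geq \|\mu\|_{\LF(X)}\quad\text{and}\quad \sum_{i=1}^k p_i\|T_i(\mu)\|_{\LF(T_i)}\leq D\|\mu\|_{\LF(X)}.
\]
In other words, $\LF(X)$ admits a stochastic $D$-embedding into dominating $\{\LF(T,d_w):(T,d_w)\text{ finite weighted tree}\}$-norms.

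Next I would invoke the isometric identification $\LF(T,d_w)\equiv \ell_1^{|T|-1}$ for every finite weighted tree $(T,d_w)$, which is already recorded in the paragraph preceding the theorem and attributed to \cite{DKO20}*{Proposition~2.1}. In particular, $d_1(\LF(T,d_w))=1$ uniformly in the tree, so the supremum appearing in Lemma~\ref{lemm:stochastic-to-deterministic} equals $K=1$ for this family.

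Finally I would apply Lemma~\ref{lemm:stochastic-to-deterministic} to the Banach space $\LF(X)$ with the family $\mathsf{F}=\{\LF(T,d_w)\}$ and $K=1$. This yields an isomorphic embedding of $\LF(X)$ into $L_1$ with Banach-Mazur distortion bounded by $D\cdot K=D$, which is the statement of the corollary.

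There is essentially no hard step here: the content of the corollary is entirely packaged inside the two preceding lemmas, together with the isometric identification of free spaces over trees with $\ell_1$-spaces. The only thing to be careful about is that the linear operators produced by Lemma~\ref{lemm:lifting} take values in $\LF$-spaces built from the \emph{same} weighted trees that realize the stochastic tree embedding of $X$; so the supremum of $d_1$ over the relevant family is indeed attained at $1$, and no hidden dependence on the size or weights of those trees creeps into the final bound. Consequently the distortion upper bound stays clean at $D$, and the proof reduces to one line citing Lemma~\ref{lemm:lifting}, Lemma~\ref{lemm:stochastic-to-deterministic}, and the fact that $\LF$ of a finite weighted tree is isometrically $\ell_1$.
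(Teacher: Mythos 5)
Your proposal is correct and follows essentially the same route as the paper: the authors likewise obtain the corollary by combining Lemma~\ref{lemm:lifting} with Lemma~\ref{lemm:stochastic-to-deterministic}, using the fact that $d_1(\LF(T,d_w))=1$ for every finite weighted tree since $\LF(T,d_w)$ is isometric to $\ell_1^{|T|-1}$. Nothing further is needed.
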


Theorem \ref{theo:general} now follows from the main result of Fakcharoenphol, Rao and Talwar in \cite{FRT04} which says that every $n$-point metric space admits a stochastic embedding into dominating tree-metrics with distortion $O(\log n)$. Note that the construction actually gives dominating ultrametrics (the image  of $X$ in the trees lies in the set of leaves) and that only $O(n\log n)$ ultrametrics are needed. Since $\LF(U)$ is O(1)-isomorphic to $\ell_1^{|U|-1}$ if $U$ is a finite ultrametric space (cf. \cite[Corollary 2.5]{DKO20}, \cite{Dalet15} or \cite{CuthDoucha16}), it follows that $\LF(X)$ embeds into an $\ell_1$-space  of dimension $O(n^2\log n)$. It is interesting to note that this upper bound on the dimension is obtained without using Talagrand's result, albeit with a slightly weaker guarantee. Let us conclude by mentioning two examples of Lipschitz free spaces for which there are lower bounds on their $L_1$-Banach-Mazur distortion. Bourgain \cite{Bourgain86} showed that $\sup_{d\geq 1}d_1(\LF(\{0,1\}^d))=\infty$, while Khot and Naor \cite[Corollary 2]{KhotNaor06} showed the sharp lower bound $d_1(\LF(\{0,1\}^d))=\Omega(d)$, where $\{0,1\}^d$ is equipped with the Hamming metric. In \cite[Proof of Theorem 1.1]{NS07} Naor and Schechtman showed that $d_1(\LF(\llbracket n\rrbracket^2))=\Omega(\sqrt{\log n})$ where $\llbracket n\rrbracket^2:=\{0,1,\dots,n-1\}^2$ is the $2$-dimensional $n\times n$ grid.



\begin{bibdiv}
\begin{biblist}

\bib{AKPW95}{article}{
      author={Alon, Noga},
      author={Karp, Richard~M.},
      author={Peleg, David},
      author={West, Douglas},
       title={A graph-theoretic game and its application to the {$k$}-server
  problem},
        date={1995},
        ISSN={0097-5397},
     journal={SIAM J. Comput.},
      volume={24},
      number={1},
       pages={78\ndash 100},
         url={https://doi.org/10.1137/S0097539792224474},
      review={\MR{1313480}},
}

\bib{AIR18}{article}{
    AUTHOR = {Andoni, Alexandr}
    AUTHOR = {Indyk, Piotr},
    AUTOR = {Razenshteyn, Ilya},
     TITLE = {Approximate nearest neighbor search in high dimensions},
 BOOKTITLE = {Proceedings of the {I}nternational {C}ongress of
              {M}athematicians---{R}io de {J}aneiro 2018. {V}ol. {IV}.
              {I}nvited lectures},
     PAGES = {3287--3318},
 PUBLISHER = {World Sci. Publ., Hackensack, NJ},
      YEAR = {2018},
   MRCLASS = {68W20 (46B09 46B85 52A21 68P05 68Q87)},
  MRNUMBER = {3966533},
}

\bib{ADS09}{article}{
      author={Arzhantseva, Goulnara},
      author={Dru\c{t}u, Cornelia},
      author={Sapir, Mark},
       title={Compression functions of uniform embeddings of groups into
  {H}ilbert and {B}anach spaces},
        date={2009},
        ISSN={0075-4102},
     journal={J. Reine Angew. Math.},
      volume={633},
       pages={213\ndash 235},
         url={https://doi.org/10.1515/CRELLE.2009.066},
      review={\MR{2561202}},
}

\bib{Austin2011}{article}{
AUTHOR = {Austin, T.},
     TITLE = {Amenable groups with very poor compression into {L}ebesgue
              spaces},
   JOURNAL = {Duke Math. J.},
  FJOURNAL = {Duke Mathematical Journal},
    VOLUME = {159},
      YEAR = {2011},
    NUMBER = {2},
     PAGES = {187--222},
}

\bib{ANP09}{article}{
      author={Austin, Tim},
      author={Naor, Assaf},
      author={Peres, Yuval},
       title={The wreath product of {$\Bbb Z$} with {$\Bbb Z$} has {H}ilbert
  compression exponent {$\frac{2}{3}$}},
        date={2009},
        ISSN={0002-9939},
     journal={Proc. Amer. Math. Soc.},
      volume={137},
      number={1},
       pages={85\ndash 90},
         url={https://doi.org/10.1090/S0002-9939-08-09501-4},
      review={\MR{2439428}},
}

\bib{ANT2013}{article}{
AUTHOR = {Austin, T.},
AUTHOR = {Naor, A.},
AUTHOR = {Tessera, R.},
     TITLE = {Sharp quantitative nonembeddability of the Heisenberg group into superreflexive Banach spaces.},
   JOURNAL = {Groups Geom. Dyn.},
    VOLUME = {7},
     YEAR = {2013},
     PAGES = {497-522},
}

\bib{ANV10}{article}{
      author={Austin, Tim},
      author={Naor, Assaf},
      author={Valette, Alain},
       title={The {E}uclidean distortion of the lamplighter group},
        date={2010},
        ISSN={0179-5376},
     journal={Discrete Comput. Geom.},
      volume={44},
      number={1},
       pages={55\ndash 74},
         url={https://doi.org/10.1007/s00454-009-9162-6},
      review={\MR{2639818}},
}

\bib{Bartal96}{incollection}{
      author={Bartal, Yair},
       title={Probabilistic approximation of metric spaces and its algorithmic
  applications},
        date={1996},
   booktitle={37th {A}nnual {S}ymposium on {F}oundations of {C}omputer
  {S}cience ({B}urlington, {VT}, 1996)},
   publisher={IEEE Comput. Soc. Press, Los Alamitos, CA},
       pages={184\ndash 193},
         url={https://doi.org/10.1109/SFCS.1996.548477},
      review={\MR{1450616}},
}

\bib{Bartal99}{incollection}{
      author={Bartal, Yair},
       title={On approximating arbitrary metrices by tree metrics},
        date={1999},
   booktitle={S{TOC} '98 ({D}allas, {TX})},
   publisher={ACM, New York},
       pages={161\ndash 168},
      review={\MR{1731572}},
}

\bib{BaudierJohnson16}{article}{
  AUTHOR = {Baudier, Florent P.},
  AUTHOR = {Johnson, William B.},
     TITLE = {{\it {M}etric embeddings: bilipschitz and coarse embeddings
              into {B}anach spaces} [book review of {MR}3114782]},
   JOURNAL = {Bull. Amer. Math. Soc. (N.S.)},
  FJOURNAL = {American Mathematical Society. Bulletin. New Series},
    VOLUME = {53},
      YEAR = {2016},
    NUMBER = {3},
     PAGES = {495--506},
      ISSN = {0273-0979},
   MRCLASS = {00A17},
  MRNUMBER = {3497795},
       DOI = {10.1090/bull/1523},
       URL = {https://doi.org/10.1090/bull/1523},
}

\bib{bmsz:19}{article}{
   author={Baudier, F},
   author={Motakis, P},
   author={Schlumprecht, Th},
   author={Zs\'ak, A},
   title={On the bi-Lipschitz geometry of lamplighter graphs},
   journal={Discrete Comput. Geom.},
   date={2020},
   doi={online first, https://doi.org/10.1007/s00454-020-00184-1},
   url={},
}

\bib{Bourgain86}{article}{
  AUTHOR = {Bourgain, J.},
     TITLE = {The metrical interpretation of superreflexivity in {B}anach
              spaces},
   JOURNAL = {Israel J. Math.},
  FJOURNAL = {Israel Journal of Mathematics},
    VOLUME = {56},
      YEAR = {1986},
    NUMBER = {2},
     PAGES = {222--230},
      ISSN = {0021-2172},
   MRCLASS = {46B10 (46B20)},
  MRNUMBER = {880292},
MRREVIEWER = {R. C. James},
       DOI = {10.1007/BF02766125},
       URL = {https://doi.org/10.1007/BF02766125},
}

\bib{CCGGP99}{incollection}{
      author={Charikar, Moses},
      author={Chekuri, Chandra},
      author={Goel, Ashish},
      author={Guha, Sudipto},
       title={Rounding via trees: deterministic approximation algorithms for
  group {S}teiner trees and {$k$}-median},
        date={1999},
   booktitle={S{TOC} '98 ({D}allas, {TX})},
   publisher={ACM, New York},
       pages={114\ndash 123},
      review={\MR{1731567}},
}

\bib{CGNRS06}{article}{
      author={Chekuri, Chandra},
      author={Gupta, Anupam},
      author={Newman, Ilan},
      author={Rabinovich, Yuri},
      author={Sinclair, Alistair},
       title={Embedding {$k$}-outerplanar graphs into {$l_1$}},
        date={2006},
        ISSN={0895-4801},
     journal={SIAM J. Discrete Math.},
      volume={20},
      number={1},
       pages={119\ndash 136},
         url={https://doi.org/10.1137/S0895480102417379},
      review={\MR{2257250}},
}

\bib{Charikar02}{inproceedings}{
      author={Charikar, Moses~S.},
       title={Similarity estimation techniques from rounding algorithms},
        date={2002},
   booktitle={Proceedings of the {T}hirty-{F}ourth {A}nnual {ACM} {S}ymposium
  on {T}heory of {C}omputing},
   publisher={ACM, New York},
       pages={380\ndash 388},
         url={https://doi.org/10.1145/509907.509965},
      review={\MR{2121163}},
}

\bib{csv:12}{article}{
  author={Cornulier, Yves},
  author={Stalder, Yves},
  author={Valette, Alain},
  title={Proper actions of wreath products and generalizations},
  journal={Trans. Amer. Math. Soc.},
  volume={364},
  date={2012},
  number={6},
  pages={3159--3184},
  issn={0002-9947},
  review={\MR {2888241}},
  doi={10.1090/S0002-9947-2012-05475-4},
}

\bib{CSV08}{article}{
      author={de~Cornulier, Yves},
      author={Stalder, Yves},
      author={Valette, Alain},
       title={Proper actions of lamplighter groups associated with free
  groups},
        date={2008},
        ISSN={1631-073X},
     journal={C. R. Math. Acad. Sci. Paris},
      volume={346},
      number={3-4},
       pages={173\ndash 176},
         url={https://doi.org/10.1016/j.crma.2007.11.027},
      review={\MR{2393636}},
}

\bib{CTV07}{article}{
      author={de~Cornulier, Yves},
      author={Tessera, Romain},
      author={Valette, Alain},
       title={Isometric group actions on {H}ilbert spaces: growth of cocycles},
        date={2007},
        ISSN={1016-443X},
     journal={Geom. Funct. Anal.},
      volume={17},
      number={3},
       pages={770\ndash 792},
         url={https://doi.org/10.1007/s00039-007-0604-0},
      review={\MR{2346274}},
}

\bib{CuthDoucha16}{article}{
  AUTHOR = {C\'{u}th, Marek},
  AUTHOR = {Doucha, Michal},
     TITLE = {Lipschitz-free spaces over ultrametric spaces},
   JOURNAL = {Mediterr. J. Math.},
  FJOURNAL = {Mediterranean Journal of Mathematics},
    VOLUME = {13},
      YEAR = {2016},
    NUMBER = {4},
     PAGES = {1893--1906},
      ISSN = {1660-5446},
   MRCLASS = {46B03 (46B15 54E35)},
  MRNUMBER = {3530906},
MRREVIEWER = {Costas Poulios},
       DOI = {10.1007/s00009-015-0566-7},
       URL = {https://doi.org/10.1007/s00009-015-0566-7},
}

\bib{Dalet15}{article}{
 AUTHOR = {Dalet, A.},
     TITLE = {Free spaces over some proper metric spaces},
   JOURNAL = {Mediterr. J. Math.},
  FJOURNAL = {Mediterranean Journal of Mathematics},
    VOLUME = {12},
      YEAR = {2015},
    NUMBER = {3},
     PAGES = {973--986},
      ISSN = {1660-5446},
   MRCLASS = {46B28 (46B04 46B10)},
  MRNUMBER = {3376824},
MRREVIEWER = {Ju Myung Kim},
       DOI = {10.1007/s00009-014-0455-5},
       URL = {https://doi.org/10.1007/s00009-014-0455-5},
}

\bib{DKP16}{article}{
    AUTHOR = {Dalet, Aude},
     AUTHOR = {Kaufmann, Pedro L. },
    AUTHOR = {Proch\'{a}zka, Anton\'{\i}n},
     TITLE = {Characterization of metric spaces whose free space is
              isometric to {$\ell_1$}},
   JOURNAL = {Bull. Belg. Math. Soc. Simon Stevin},
  FJOURNAL = {Bulletin of the Belgian Mathematical Society. Simon Stevin},
    VOLUME = {23},
      YEAR = {2016},
    NUMBER = {3},
     PAGES = {391--400},
      ISSN = {1370-1444},
   MRCLASS = {46B04 (46B20)},
  MRNUMBER = {3545460},
MRREVIEWER = {Antonio J. Guirao},
       URL = {http://projecteuclid.org/euclid.bbms/1473186513},
}

\bib{DKO20}{article}{
   AUTHOR = {Dilworth, Stephen J.},
   AUTHOR = {Kutzarova, Denka },
   AUTHOR = {Ostrovskii,
              Mikhail I.},
     TITLE = {Lipschitz-free {S}paces on {F}inite {M}etric {S}paces},
   JOURNAL = {Canad. J. Math.},
  FJOURNAL = {Canadian Journal of Mathematics. Journal Canadien de
              Math\'{e}matiques},
    VOLUME = {72},
      YEAR = {2020},
    NUMBER = {3},
     PAGES = {774--804},
      ISSN = {0008-414X},
   MRCLASS = {52 (30 42 46)},
  MRNUMBER = {4098600},
       DOI = {10.4153/s0008414x19000087},
       URL = {https://doi.org/10.4153/s0008414x19000087},
}
\bib{FRT04}{article}{
      author={Fakcharoenphol, Jittat},
      author={Rao, Satish},
      author={Talwar, Kunal},
       title={{A tight bound on approximating arbitrary metrics by tree
  metrics}},
        date={2004},
        ISSN={0022-0000},
     journal={Journal of Computer and System Sciences},
      volume={69},
      number={3},
       pages={485 497},
}

\bib{GodefroyKalton03}{article}{
    AUTHOR = {Godefroy, G.},
    AUTHOR = {Kalton, N. J.},
     TITLE = {Lipschitz-free {B}anach spaces},
      NOTE = {Dedicated to Professor Aleksander Pe\l czy\'{n}ski on the occasion
              of his 70th birthday},
   JOURNAL = {Studia Math.},
  FJOURNAL = {Studia Mathematica},
    VOLUME = {159},
      YEAR = {2003},
    NUMBER = {1},
     PAGES = {121--141},
      ISSN = {0039-3223},
   MRCLASS = {46B20 (46B26 46B28)},
  MRNUMBER = {2030906},
MRREVIEWER = {Yehoram Gordon},
       DOI = {10.4064/sm159-1-6},
       URL = {https://doi.org/10.4064/sm159-1-6},
}

\bibitem[GK04]{GK04}
E.~Guentner and J.~Kaminker.
\newblock Exactness and uniform embeddability of discrete groups.
\newblock {\em J. London Math. Soc. (2)}, 70(3):703--718, 2004.

\bib{GNRS04}{article}{
      author={Gupta, Anupam},
      author={Newman, Ilan},
      author={Rabinovich, Yuri},
      author={Sinclair, Alistair},
       title={Cuts, trees and {$l_1$}-embeddings of graphs},
        date={2004},
        ISSN={0209-9683},
     journal={Combinatorica},
      volume={24},
      number={2},
       pages={233\ndash 269},
         url={https://doi.org/10.1007/s00493-004-0015-x},
      review={\MR{2071334}},
}

\bib{Gupta01}{inproceedings}{
      author={Gupta, Anupam},
       title={Steiner points in tree metrics don't (really) help},
        date={2001},
   booktitle={Proceedings of the {T}welfth {A}nnual {ACM}-{SIAM} {S}ymposium on
  {D}iscrete {A}lgorithms ({W}ashington, {DC}, 2001)},
   publisher={SIAM, Philadelphia, PA},
       pages={220\ndash 227},
      review={\MR{1958411}},
}

\bib{Indyk01}{incollection}{
      author={Indyk, Piotr},
       title={Algorithmic applications of low-distortion geometric embeddings},
        date={2001},
   booktitle={42nd {IEEE} {S}ymposium on {F}oundations of {C}omputer {S}cience
  ({L}as {V}egas, {NV}, 2001)},
   publisher={IEEE Computer Soc., Los Alamitos, CA},
       pages={10\ndash 33},
      review={\MR{1948692}},
}

\bib{IndykThaper03}{inproceedings}{
      author={Indyk, Piotr},
      author={Thaper, Nitin},
       title={Fast image retrieval via embeddings},
        date={2003},
   booktitle={3rd international workshop on statistical and computational
  theories of vision},
      volume={2},
       pages={5},
}

\bib{JolissaintValette14}{article}{
      author={Jolissaint, Pierre-Nicolas},
      author={Valette, Alain},
       title={{Lp-distortion and p-spectral gap of finite graphs}},
    language={English},
        date={2014},
        ISSN={0024-6093},
     journal={Bulletin of the London Mathematical Society},
      volume={46},
      number={2},
       pages={329 341},
      eprint={1110.0909},
}

\bibitem[Jon90]{Jones1990}
P.~W. Jones.
\newblock Rectifiable sets and the travelling salesman problem.
\newblock {\em Invent. Math.}, 102(1):1--15, 1990.

\bib{Karp89}{article}{
      author={Karp. R. M.},
       title={A $2k$-competitive algorithm for the circle},
        date={1989},
     journal={unpublished},
      volume={},
      number={},
       pages={},
}

\bib{KhotNaor06}{article}{
      author={Khot, Subhash},
      author={Naor, Assaf},
       title={{Nonembeddability theorems via Fourier analysis}},
    language={English},
        date={2006},
        ISSN={0025-5831},
     journal={Mathematische Annalen},
      volume={334},
      number={4},
       pages={821 852},
}

\bib{KRS01}{article}{
      author={Konjevod, Goran},
      author={Ravi, R.},
      author={Salman, F.~Sibel},
       title={On approximating planar metrics by tree metrics},
        date={2001},
        ISSN={0020-0190},
     journal={Inform. Process. Lett.},
      volume={80},
      number={4},
       pages={213\ndash 219},
         url={https://doi.org/10.1016/S0020-0190(01)00161-2},
      review={\MR{1859342}},
}

\bib{KleinbergTardos02}{article}{
      author={Kleinberg, Jon},
      author={Tardos, \'{E}va},
       title={Approximation algorithms for classification problems with
  pairwise relationships: metric labeling and {M}arkov random fields},
        date={2002},
        ISSN={0004-5411},
     journal={J. ACM},
      volume={49},
      number={5},
       pages={616\ndash 639},
         url={https://doi.org/10.1145/585265.585268},
      review={\MR{2145145}},
}

\bib{KleinbergTardos99}{incollection}{
      author={Kleinberg, Jon},
      author={Tardos, \'{E}va},
       title={Approximation algorithms for classification problems with
  pairwise relationships: metric labeling and {M}arkov random fields},
        date={1999},
   booktitle={40th {A}nnual {S}ymposium on {F}oundations of {C}omputer
  {S}cience ({N}ew {Y}ork, 1999)},
   publisher={IEEE Computer Soc., Los Alamitos, CA},
       pages={14\ndash 23},
         url={https://doi.org/10.1109/SFFCS.1999.814572},
      review={\MR{1916180}},
}

\bib{Li10}{article}{
      author={Li, Sean},
       title={Compression bounds for wreath products},
        date={2010},
        ISSN={0002-9939},
     journal={Proc. Amer. Math. Soc.},
      volume={138},
      number={8},
       pages={2701\ndash 2714},
         url={https://doi.org/10.1090/S0002-9939-10-10307-4},
      review={\MR{2644886}},
}

\bib{LLR95}{article}{
      author={Linial, N.},
      author={London, E.},
      author={Rabinovich, Y.},
       title={The geometry of graphs and some of its algorithmic applications},
        date={1995},
        ISSN={0209-9683},
     journal={Combinatorica},
      volume={15},
      number={2},
       pages={215\ndash 245},
         url={http://dx.doi.org/10.1007/BF01200757},
      review={\MR{1337355 (96e:05158)}},
}

\bib{LNP09}{article}{
      author={Lee, James~R.},
      author={Naor, Assaf},
      author={Peres, Yuval},
       title={Trees and {M}arkov convexity},
        date={2009},
        ISSN={1016-443X},
     journal={Geom. Funct. Anal.},
      volume={18},
      number={5},
       pages={1609\ndash 1659},
         url={https://doi.org/10.1007/s00039-008-0689-0},
      review={\MR{2481738}},
}

\bib{LeeSidi13}{article}{
      author={Lee, James~R.},
      author={Sidiropoulos, Anastasios},
       title={Pathwidth, trees, and random embeddings},
        date={2013},
        ISSN={0209-9683},
     journal={Combinatorica},
      volume={33},
      number={3},
       pages={349\ndash 374},
         url={https://doi.org/10.1007/s00493-013-2685-8},
      review={\MR{3144806}},
}

\bib{LPP96}{article}{
  AUTHOR = {Lyons, Russell},
  AUTHOR = {Pemantle, Robin},
  AUTHOR = {Peres, Yuval},
     TITLE = {Random walks on the lamplighter group},
   JOURNAL = {Ann. Probab.},
  FJOURNAL = {The Annals of Probability},
    VOLUME = {24},
      YEAR = {1996},
    NUMBER = {4},
     PAGES = {1993--2006},
      ISSN = {0091-1798},
   MRCLASS = {60B15 (60J15)},
  MRNUMBER = {1415237},
MRREVIEWER = {Donald I. Cartwright},
       DOI = {10.1214/aop/1041903214},
       URL = {https://doi.org/10.1214/aop/1041903214},
}

\bib{Matousek_book}{book}{
    AUTHOR = {Matou\v{s}ek, Ji\v{r}\'{\i}},
     TITLE = {Lectures on discrete geometry},
    SERIES = {Graduate Texts in Mathematics},
    VOLUME = {212},
 PUBLISHER = {Springer-Verlag, New York},
      YEAR = {2002},
     PAGES = {xvi+481},
      ISBN = {0-387-95373-6},
   MRCLASS = {52Cxx (52-01)},
  MRNUMBER = {1899299},
MRREVIEWER = {E. Hertel},
       DOI = {10.1007/978-1-4613-0039-7},
       URL = {https://doi.org/10.1007/978-1-4613-0039-7},
}

\bib{MendelNaor08}{article}{
  AUTHOR = {Mendel, Manor},
    AUTHOR = {Naor, Assaf},
         TITLE = {Metric cotype},
   JOURNAL = {Ann. of Math. (2)},
  FJOURNAL = {Annals of Mathematics. Second Series},
    VOLUME = {168},
      YEAR = {2008},
    NUMBER = {1},
     PAGES = {247--298},
      ISSN = {0003-486X},
   MRCLASS = {46B07 (05C12 46B20 54E35 90C27)},
  MRNUMBER = {2415403},
MRREVIEWER = {Mikhail Ostrovskii},
       DOI = {10.4007/annals.2008.168.247},
       URL = {https://doi.org/10.4007/annals.2008.168.247},
}

\bib{Naor10}{article}{
    AUTHOR = {Naor, Assaf},
     TITLE = {{$L_1$} embeddings of the {H}eisenberg group and fast
              estimation of graph isoperimetry},
 BOOKTITLE = {Proceedings of the {I}nternational {C}ongress of
              {M}athematicians. {V}olume {III}},
     PAGES = {1549--1575},
 PUBLISHER = {Hindustan Book Agency, New Delhi},
      YEAR = {2010},
   MRCLASS = {46-02 (46B80 46B85)},
  MRNUMBER = {2827855},
}

\bib{Naor13}{article}{
   AUTHOR = {Naor, Assaf},
     TITLE = {Quantitative geometry},
   JOURNAL = {Proc. Natl. Acad. Sci. USA},
  FJOURNAL = {Proceedings of the National Academy of Sciences of the United
              States of America},
    VOLUME = {110},
      YEAR = {2013},
    NUMBER = {48},
     PAGES = {19202--19205},
      ISSN = {0027-8424},
   MRCLASS = {51-01 (20-01 52-01 53-01 55-01)},
  MRNUMBER = {3153946},
       DOI = {10.1073/pnas.1320388110},
       URL = {https://doi.org/10.1073/pnas.1320388110},
}

\bib{naor-peres:08}{article}{
  author={Naor, Assaf},
  author={Peres, Yuval},
  title={Embeddings of discrete groups and the speed of random walks},
  journal={Int. Math. Res. Not. IMRN},
  date={2008},
  pages={Art. ID rnn 076, 34},
  issn={1073-7928},
  review={\MR {2439557}},
  doi={10.1093/imrn/rnn076},
}

\bib{naor-peres:11}{article}{
  author={Naor, Assaf},
  author={Peres, Yuval},
  title={$L_p$ compression, traveling salesmen, and stable walks},
  journal={Duke Math. J.},
  volume={157},
  date={2011},
  number={1},
  pages={53--108},
  issn={0012-7094},
  review={\MR {2783928}},
  doi={10.1215/00127094-2011-002},
}

\bib{NS07}{article}{
 AUTHOR = {Naor, Assaf},
 AUTHOR = {Schechtman, Gideon},
     TITLE = {Planar earthmover is not in {$L_1$}},
   JOURNAL = {SIAM J. Comput.},
  FJOURNAL = {SIAM Journal on Computing},
    VOLUME = {37},
      YEAR = {2007},
    NUMBER = {3},
     PAGES = {804--826},
      ISSN = {0097-5397},
   MRCLASS = {46B07 (65D18)},
  MRNUMBER = {2341917},
MRREVIEWER = {Vania D. Mascioni},
       DOI = {10.1137/05064206X},
       URL = {https://doi.org/10.1137/05064206X},
}

\bib{NowakYubook}{book}{
      author={Nowak, Piotr~W.},
      author={Yu, Guoliang},
       title={Large scale geometry},
      series={EMS Textbooks in Mathematics},
   publisher={European Mathematical Society (EMS), Z\"{u}rich},
        date={2012},
        ISBN={978-3-03719-112-5},
         url={https://doi.org/10.4171/112},
      review={\MR{2986138}},
}

\bib{Ostrovskii_book}{book}{
  AUTHOR = {Ostrovskii, Mikhail I.},
     TITLE = {Metric embeddings},
    SERIES = {De Gruyter Studies in Mathematics},
    VOLUME = {49},
      NOTE = {Bilipschitz and coarse embeddings into Banach spaces},
 PUBLISHER = {De Gruyter, Berlin},
      YEAR = {2013},
     PAGES = {xii+372},
      ISBN = {978-3-11-026340-4; 978-3-11-026401-2},
   MRCLASS = {46-01 (46-02 46B20 46B85)},
  MRNUMBER = {3114782},
MRREVIEWER = {Florent Baudier},
       DOI = {10.1515/9783110264012},
       URL = {https://doi.org/10.1515/9783110264012},
}

\bib{OR18}{article}{
      author={Ostrovskii, Mikhail I.},
   author={Randrianantoanina, Beata},
   title={A characterization of superreflexivity through embeddings of
   lamplighter groups},
   journal={Proc. Amer. Math. Soc.},
   volume={147},
   date={2019},
   number={11},
   pages={4745--4755},
   issn={0002-9939},
   review={\MR{4011509}},
   doi={10.1090/proc/14526}
}

\bib{Pisier80}{article}{
   AUTHOR = {Pisier, G.},
     TITLE = {Un th\'{e}or\`eme sur les op\'{e}rateurs lin\'{e}aires entre espaces de
              {B}anach qui se factorisent par un espace de {H}ilbert},
   JOURNAL = {Ann. Sci. \'{E}cole Norm. Sup. (4)},
  FJOURNAL = {Annales Scientifiques de l'\'{E}cole Normale Sup\'{e}rieure. Quatri\`eme
              S\'{e}rie},
    VOLUME = {13},
      YEAR = {1980},
    NUMBER = {1},
     PAGES = {23--43},
      ISSN = {0012-9593},
   MRCLASS = {47B10 (46B99)},
  MRNUMBER = {584081},
MRREVIEWER = {N. Tomczak-Jaegermann},
       URL = {http://www.numdam.org/item?id=ASENS_1980_4_13_1_23_0},
}

\bib{RabinovichRaz98}{article}{
      author={Rabinovich, Y.},
      author={Raz, R.},
       title={Lower bounds on the distortion of embedding finite metric spaces
  in graphs},
        date={1998},
        ISSN={0179-5376},
     journal={Discrete Comput. Geom.},
      volume={19},
      number={1},
       pages={79\ndash 94},
         url={https://doi.org/10.1007/PL00009336},
      review={\MR{1486638}},
}

\bib{RobertsonSeymour83}{article}{
      author={Robertson, Neil},
      author={Seymour, P.~D.},
       title={Graph minors. {I}. {E}xcluding a forest},
        date={1983},
        ISSN={0095-8956},
     journal={J. Combin. Theory Ser. B},
      volume={35},
      number={1},
       pages={39\ndash 61},
         url={https://doi.org/10.1016/0095-8956(83)90079-5},
      review={\MR{723569}},
}

\bib{StalderValette07}{article}{
      author={Stalder, Yves},
      author={Valette, Alain},
       title={Wreath products with the integers, proper actions and {H}ilbert
  space compression},
        date={2007},
        ISSN={0046-5755},
     journal={Geom. Dedicata},
      volume={124},
       pages={199\ndash 211},
         url={https://doi.org/10.1007/s10711-006-9119-3},
      review={\MR{2318545}},
}

\bib{Talagrand90}{article}{
   AUTHOR = {Talagrand, Michel},
     TITLE = {Embedding subspaces of {$L_1$} into {$l^N_1$}},
   JOURNAL = {Proc. Amer. Math. Soc.},
  FJOURNAL = {Proceedings of the American Mathematical Society},
    VOLUME = {108},
      YEAR = {1990},
    NUMBER = {2},
     PAGES = {363--369},
      ISSN = {0002-9939},
   MRCLASS = {46B25 (46E30)},
  MRNUMBER = {994792},
MRREVIEWER = {Y. Benyamini},
       DOI = {10.2307/2048283},
       URL = {https://doi.org/10.2307/2048283},
}

\bib{Tessera08}{article}{
      author={Tessera, Romain},
       title={{Quantitative property A, Poincar{\'e} inequalities, L p
  -compression and L p -distortion for metric measure spaces}},
        date={2008},
        ISSN={0046-5755},
     journal={Geometriae Dedicata},
      volume={136},
      number={1},
       pages={203 220},
}

\bib{Tessera09}{article}{
      author={Tessera, Romain},
       title={{Coarse embeddings into a Hilbert space, Haagerup property and
  Poincar\textbackslash'e inequalities}},
        date={2009},
        ISSN={1793-5253},
     journal={Journal of Topology and Analysis},
      volume={1},
      number={1},
       pages={87 100},
}

\bib{Tessera11}{article}{
      author={Tessera, Romain},
       title={Asymptotic isoperimetry on groups and uniform embeddings into
  {B}anach spaces},
        date={2011},
        ISSN={0010-2571},
     journal={Comment. Math. Helv.},
      volume={86},
      number={3},
       pages={499\ndash 535},
         url={https://doi.org/10.4171/CMH/232},
      review={\MR{2803851}},
}

\bib{Tessera12}{article}{
      author={Tessera, Romain},
       title={{On the $L_p$-distortion of finite quotients of amenable
  groups}},
        date={2012},
        ISSN={1385-1292},
     journal={Positivity},
      volume={16},
      number={4},
       pages={633 640},
}

\bib{Tikhomirov19}{article}{
    AUTHOR = {Tikhomirov, Konstantin},
     TITLE = {On the {B}anach-{M}azur distance to cross-polytope},
   JOURNAL = {Adv. Math.},
  FJOURNAL = {Advances in Mathematics},
    VOLUME = {345},
      YEAR = {2019},
     PAGES = {598--617},
      ISSN = {0001-8708},
   MRCLASS = {52A27 (46B20 52A20 52A21 52A22 60H25)},
  MRNUMBER = {3900705},
MRREVIEWER = {Marek Lassak},
       DOI = {10.1016/j.aim.2019.01.013},
       URL = {https://doi.org/10.1016/j.aim.2019.01.013},
}

\bib{Weaver18}{book}{
    AUTHOR = {Weaver, Nik},
     TITLE = {Lipschitz algebras},
      NOTE = {Second edition of [ MR1832645]},
 PUBLISHER = {World Scientific Publishing Co. Pte. Ltd., Hackensack, NJ},
      YEAR = {2018},
     PAGES = {xiv+458},
      ISBN = {978-981-4740-63-0},
   MRCLASS = {46-02 (26A16 46Bxx 46Exx 46H05 46J10)},
  MRNUMBER = {3792558},
MRREVIEWER = {Antonio Jim\'{e}nez-Vargas},
}
\bib{Youssef14}{article}{ 
   AUTHOR = {Youssef, Pierre},
     TITLE = {Restricted invertibility and the {B}anach-{M}azur distance to
              the cube},
   JOURNAL = {Mathematika},
  FJOURNAL = {Mathematika. A Journal of Pure and Applied Mathematics},
    VOLUME = {60},
      YEAR = {2014},
    NUMBER = {1},
     PAGES = {201--218},
      ISSN = {0025-5793},
   MRCLASS = {46B20 (15A60)},
  MRNUMBER = {3164527},
MRREVIEWER = {Enrique Alfonso S\'{a}nchez-P\'{e}rez},
       DOI = {10.1112/S0025579313000144},
       URL = {https://doi.org/10.1112/S0025579313000144},
}
  \end{biblist}
\end{bibdiv}

\end{document}